\definecolor{light-gray}{gray}{0.85}
\begin{document}
%
% paper title
% Titles are generally capitalized except for words such as a, an, and, as,
% at, but, by, for, in, nor, of, on, or, the, to and up, which are usually
% not capitalized unless they are the first or last word of the title.
% Linebreaks \\ can be used within to get better formatting as desired.
% Do not put math or special symbols in the title.
\title{Iterative Observer for Boundary Estimation\\ for Elliptic Equations}
%
%
% author names and IEEE memberships
% note positions of commas and nonbreaking spaces ( ~ ) LaTeX will not break
% a structure at a ~ so this keeps an author's name from being broken across
% two lines.
% use \thanks{} to gain access to the first footnote area
% a separate \thanks must be used for each paragraph as LaTeX2e's \thanks
% was not built to handle multiple paragraphs
%

\author{Muhammad~Usman~Majeed%,~\IEEEmembership{Member,~IEEE,}
        %John~Doe,~\IEEEmembership{Fellow,~OSA,}
        ~and~Taous~Meriem~Laleg-Kirati %,~\IEEEmembership{Senior Member,~IEEE}% <-this % stops a space
%\thanks{M. Shell was with the Department
%of Electrical and Computer Engineering, Georgia Institute of Technology, Atlanta,
%GA, 30332 USA e-mail: (see http://www.michaelshell.org/contact.html).}% 
\thanks{M. U. Majeed and T. M. Laleg-Kirati are with Computer Electrical and Mathematical Sciences and Engineering (CEMSE) Division at King Abdullah University of Science and Technology (KAUST), K.S.A.}
%<-this % stops a space
%\thanks{J. Doe and J. Doe are with Anonymous University.}% <-this % stops a space
%\thanks{Manuscript received April 19, 2005; revised August 26, 2015.}
}

\maketitle

% As a general rule, do not put math, special symbols or citations
% in the abstract or keywords.
\begin{abstract}
In this paper we propose the design of an iterative observer using space as a time-like variable and prove its convergence. The iterative observer algorithm solves boundary estimation problem for a steady-state elliptic equation system namely Cauchy problem for Laplace equation.
%In this paper we propose the design and prove the convergence of an iterative observer algorithm using space as a time-like variable to solve boundary estimation problem for a steady-state elliptic system namely Cauchy problem for Laplace equation. 
The Laplace equation is formulated as a first order state space-like system in one of the space variables and an iterative observer is developed that sweeps over the whole domain to recover the unknown data on the boundary. State operator matrix is proved to generate strongly continuous semigroup under certain conditions and the system is shown to be observable. Convergence results of proposed algorithm are established using semigroup theory and concepts of observability for distributed parameter systems. 
The algorithm is implemented using finite difference discretization schemes and numerical implementation is detailed. Further, the simulation results are presented towards the end to show efficiency of the algorithm.
\end{abstract}

%%% Note that keywords are not normally used for peerreview papers.
%%\begin{IEEEkeywords}
%%IEEE, IEEEtran, journal, \LaTeX, paper, template.
%%\end{IEEEkeywords}

% For peer review papers, you can put extra information on the cover
% page as needed:
% \ifCLASSOPTIONpeerreview
% \begin{center} \bfseries EDICS Category: 3-BBND \end{center}
% \fi
%
% For peerreview papers, this IEEEtran command inserts a page break and
% creates the second title. It will be ignored for other modes.
\IEEEpeerreviewmaketitle

\section{Introduction}
% The very first letter is a 2 line initial drop letter followed
% by the rest of the first word in caps.
% 
% form to use if the first word consists of a single letter:
% \IEEEPARstart{A}{demo} file is ....
% 
% form to use if you need the single drop letter followed by
% normal text (unknown if ever used by the IEEE):
% \IEEEPARstart{A}{}demo file is ....
% 
% Some journals put the first two words in caps:
% \IEEEPARstart{T}{his demo} file is ....
% 
% Here we have the typical use of a "T" for an initial drop letter
% and "HIS" in caps to complete the first word.
\IEEEPARstart{T}{he} problem to estimate some unknown states of a physical system from some measured data using state observers is well-known in dynamical systems' theory. State observer is an algorithm that provides estimates of internal states of a given real system from measurements of inputs and outputs \cite{c12-1}. 
%It is a well-known concept in dynamical systems' theory. 
Early state observer designs were proposed for state estimation for lumped parameter systems governed by ordinary differential equations (ODEs). However the concepts of observer design have been extended to distributed parameter systems (DPSs) modeled by time varying partial differential equation (PDEs) \cite{new_1, new_2, feb_8}. Traditionally for DPSs early or late lumping techniques are considered \cite{feb_1}. Early lumping techniques transform DPS to a finite dimensional system of ODEs using some approximation and discretization techniques \cite{feb_2, feb_3}. The resultant system of ODEs is an approximation to the DPS and unknown states recovered by the state observers may not be the estimate of true states \cite{feb_4}. On the other hand late lumping techniques exploit mathematical properties of underlying PDEs to develop observer design. Various design techniques based on semigroup theory, spectral theory, Lyapunov based design, backstepping approaches are available \cite{new_2, feb_8, new_3, feb_5, feb_6, feb_7, feb_10, feb_9}. All of these methods and techniques are focussed on time varying systems modeled by hyperbolic or parabolic PDEs. However there has been very little effort to develop observer-like algorithms for systems governed by steady-state elliptic PDEs. One example is \cite{new_4}, where  an extra time variable is introduced to solve steady-state heat conduction problem modeled by elliptic PDE as a parabolic problem. The apparent reason for not tackling steady-state elliptic PDE problems using dynamical systems' inspired methods is the unavailability of time dynamics.

In this paper, the objective is to develop an observer-like iterative algorithm using space as time-like. For this purpose steady-state Laplace equation is represented as an infinite-dimensional linear state-space-like system and boundary state estimation strategy is developed. The goal is to extend the dynamical theory concept of state-observer to steady state boundary value elliptic problems without introducing a particular notion of time and to explore the potential challenges to develop such an algorithm. As per knowledge of the authors such a strategy for time-independent systems governed by PDEs has not been studied in literature previously. The successful implementation of such an algorithm will provide a major step towards the possibility of tackling both steady-state and time varying PDE problems using dynamical systems' techniques in a more uniform manner.

The boundary estimation problem for steady-state elliptic equation, namely Cauchy problem for Laplace equation has been a fundamental problem of interest in many diverse areas of science and engineering. For example non-destructive testing applications in mechanics, where we are interested in finding inside cracks from boundary measurements \cite{b1}. Biomedical applications in finding the actual heart potential from electrocardiogram (ECG) data collected on the body torso. Finding the actual heart potential is vital to understand the functionality of heart valves \cite{c7,c6}. 
%Further there are some geophysical applications \cite{b4}. 
Readers may refer to a number of existing numerical solution techniques for elliptic Cauchy problems for further understanding of nature of the mathematical problem, e.g. \cite{b5,b6,b7,c3,c4,c_4,a2,b10}. Almost all of these techniques can be categorized as optimization based methods whereas the algorithm presented in this paper is based on observer design.%, a concept from dynamical systems' theory \cite{new_1, c12-1}.

The paper is organized as follows. Some notations and definitions are provided in section \ref{section_notation}. Problem formulation and transformation to a control familiar state-space representation is provided in section \ref{formulation}. Iterative observer design and proof of convergence is provided in section \ref{obsv_design}. Numerical implementation using finite difference methods and numerical simulation results are discussed in section \ref{section_numerical}. Finally the paper is concluded with a discussion in section \ref{conclusions}.

\section{Notations and definitions}\label{section_notation}
In this section, let $X$ be a Hilbert space with inner product $\left\langle.,.\right\rangle$ and corresponding norm $\|.\|$. If $X$ and $Y$ are two Hilbert spaces then $\mathcal{L}(X,Y)$ denotes the space of linear operators from $X$ to $Y$ with induced norm. Further $\mathcal{L}(X)=\mathcal{L}(X,X)$. Let an infinite dimensional linear dynamical system be presented in state space representation as,
\begin{equation} 
\label{state-space}
\dot{\xi}(x)=\mathcal{A}\xi(x);\quad y(x)=\mathcal{C}\xi(x);
\end{equation}
such that `` $\dot{}$ '' represents partial derivative with respect to time-like variable $x$, $\xi$ be a state vector, $\mathcal{A}:D(\mathcal{A})\rightarrow X$ be the state operator matrix, $\mathcal{C}\in \mathcal{L}(X,Y)$ be the observation operator with observation space $Y$.

\theoremstyle{definition}
\newtheorem{definition}{Definition}
\begin{definition} \cite{feb_9}\label{definition1}
\quad A family $\mathbb{T}=(\mathbb{T}_x)_{x\geq 0}$ of operators in $\mathcal{L}(X)$ defines a strongly continuous semigroup ($C_0$-Semigroup) on $X$ if,
\begin{enumerate}
\item $\mathbb{T}_0 = I$,\quad\quad\quad\quad\quad\quad\quad\quad\quad\quad\quad \;(identity)
\item $\mathbb{T}_{x+w} = \mathbb{T}_x\mathbb{T}_w,\quad \forall x,w\geq 0$, \quad \;(semigroup property)
\item $\lim_{x\rightarrow 0^{+}} \| \mathbb{T}_{x}\xi -\xi \|=0 \quad \forall \xi\in X$. (strong continuity)
\end{enumerate}
\end{definition}

%%%%%	REMOVED DEFINITIONS   %%%%%

\begin{definition}
Let $\mathcal{C}\in \mathcal{L}(X,Y)$ be the observation operator. For all $\bar{x}>0$, let $\Psi_{\bar{x}}\in\mathcal{L}(X,L^2\left( [0,\bar{x}];Y\right))$ be the output map operator for the system (\ref{state-space}) such that,
\begin{equation}
\left(\Psi_{\bar{x}}\xi(0)\right)(x) = \begin{cases}
\mathcal{C}\mathbb{T}_{x}\xi(0) & \forall \;x\in[0,\bar{x}],\\
0 & \forall \;x>\bar{x}.
\end{cases}
\end{equation}
\end{definition}

\begin{definition}
%\cite{weiss}
\label{definition2} Let $\mathbb{T}$ be the strongly continuous semigroup on space $X$ with generator $\mathcal{A}:D(\mathcal{A})\rightarrow X$ and $\mathcal{C}\in\mathcal{L}(X,Y)$ be the observation operator. The pair $\mathcal{(C,A)}$ is exactly observable in $\bar{x}$ if $\Psi_{\bar{x}}$ is bounded from below.
% Pair $(\mathcal{A,C})$ is final state observable in some horizontal distance (time) $\bar{x}$ if there exists a $k_{\bar{x}}>0$ such that $\| \Psi_{\bar{x}}\xi(0) \| \geq k_{\bar{x}} \| \mathbb{T}_{\bar{x}}\xi(0) \|$.
\end{definition}
\noindent 
%Using the density of D($\mathcal{A}^{\infty}$) in $X$ the 
above definition of exact observability of the pair $(\mathcal{C,A})$ is equivalent to the fact that there exists $k_{\bar{x}}>0$ such that,
\begin{equation}\label{exact_obsv}
\int_0^{\bar{x}}\left\| \Psi_{\bar{x}}\xi(0) \right\|^2 dx \geq k_{\bar{x}}^2 \left\| \xi(0) \right\|^2 \quad\forall \xi(0) \in X.
\end{equation}

\begin{definition}
%\cite{weiss}
\label{definition3}
Pair $(\mathcal{C,A})$ as defined above is final state observable in $\bar{x}$ if there exists a constant $k_{\bar{x}}>0$ such that,
\begin{equation}\label{final_obsv_1}
\| \Psi_{\bar{x}} \xi(0) \| \geq k_{\bar{x}} \| \mathbb{T}_{\bar{x}} \xi(0) \| \quad\forall \xi(0) \in X.
\end{equation}
\end{definition}
%\theoremstyle{note}
%\newtheorem{note}{Note}
%\begin{remark}\label{remark1}
\subsubsection*{Note 1} For $\bar{x}\rightarrow 0$ and given that $k_0>0$, then using strong continuity of operator semgigroup $\mathbb{T}$ we can see that definitions in equation (\ref{exact_obsv}) and (\ref{final_obsv_1}) converge.

\subsection*{Lumer-Phillips Theorem:}
Let $\mathcal{A}: D(\mathcal{A})\rightarrow X$ be an unbounded operator on a Hilbert space X. Then the following two assertions are equivalent.
\begin{enumerate}
	\item $\mathcal{A}$ is maximally dissipative.
	\item $\mathcal{A}$ is the generator of a contraction semigroup $(\mathbb{T}_x)_{x\geq 0}$, i.e. $\| \mathbb{T}_x \|\leq 1$ for all $x>0$.
\end{enumerate}
%\end{note}
%\begin{definition}
%\cite{weiss} \label{approx_obsv}
%Pair $(\mathcal{C,A})$ as defined above is approximately observable in $\bar{x}$ if Ker$\Psi_{\bar{x}}=\{0\}$.
%\end{definition}
%\\~\\
%\noindent Further another remark from \cite{weiss} suggests that exact observability implies the other two concepts of observability.

%%%%%	REMOVED DEFINITIONS   %%%%%

\begin{definition}
\label{concat1}
Let $x\in[c,d)$ for all $c,d\in \mathbb{R}$ and $d>c$ then $x_m$, for all $m\in \tilde{\mathbb{Z}}=\{0\}\cup\mathbb{Z}^+$, represents $x$ over $m^{th}$ iteration over the interval $[c,d)$ .

%Further let $\underset{d-c}{\diamondsuit}$ represents concatenation or joining of two intervals of length $d-c$ and $I_i = [c,d)$ for all $i\in \mathbb{N}$ then
%\begin{equation} \label{concatenation}
%	x_1 \underset{d-c}{\diamondsuit} y \underset{d-c}{\diamondsuit} \hdots \underset{d-c}{\diamondsuit} x_m = x, \quad for \; x\in\cup_{i=1}^{m}I_i.
%\end{equation}
\end{definition}
%\noindent Above concatenation, over the interval of length $d-c$, suggests various iterations over the interval $[c,d)$.
%In general, let us define $x_m$ as $x\in[c,d]$ for $m-$th iteration over the interval $[c,d]$ for $m\geq 1$.
%\bigskip\\

Further without loss of generality, let $s\in[0,\pi/4]$, $\mathcal{A}:\mathit{D}(\mathcal{A})\rightarrow X$ be an unbounded differential operator matrix given as,
\begin{equation}
	\mathcal{A} = \left(\begin{array}{cc} 0 & 1\\ -\dfrac{\partial^2}{\partial s^2} & 0\end{array}\right),
\end{equation}
 such that,
%\begin{eqnarray}
%\label{operator_A}\mathcal{A} &=& 
%\left( \begin{array}{cc} 
%0 & 1\\ -\dfrac{\partial^2}{\partial y^2} & 0
%\end{array} \right),\\
\begin{eqnarray}
\label{space-x} X &=& H_{\Gamma_{T}}^1\left(0,\frac{\pi}{4}\right)\times L^2\left(0,\frac{\pi}{4}\right),\\
\label{space-domain} \mathit{D}(\mathcal{A}) &=& \left[f \in H^2\left(0,\frac{\pi}{4}\right)\cap H_{\Gamma_{T}}^1\left(0,\frac{\pi}{4}\right) \left. \right\rvert \frac{df}{ds}(0)=c_2\right] \nonumber\\ & & \times H_{\Gamma_{T}}^1\left(0,\frac{\pi}{4}\right),
\end{eqnarray}
where,
\begin{equation}
H^1_{\Gamma_T}\left( 0,\frac{\pi}{4}\right) =  \left\lbrace f \in H^1\left( 0,\frac{\pi}{4} \right) \left. \right\rvert f(0)=c_1 \right\rbrace,
\end{equation}
and $c_1,c_2$ are constants (coming from Cauchy data at a particular point on $\Gamma_T$) and $X$ is a Hilbert space with scalar product given by,
\begin{eqnarray}
\label{norm}
\left\langle \left(\begin{array}{c}
q_1\\q_2
\end{array} \right), \left( \begin{array}{c}
p_1\\p_2
\end{array}\right) \right\rangle = \int_0^{\frac{\pi}{4}}\frac{dq_1}{ds}(s)\frac{d\bar{p}_1}{ds}(s)ds \nonumber \\  +\int_0^{\frac{\pi}{4}}q_1(s)\bar{p}_1(s) ds+\int_0^{\frac{\pi}{4}}q_2(s)\bar{p}_2(s)ds.
\end{eqnarray}
It can be seen that $D(\mathcal{A}^{\infty})$ is dense in $X$.

\section{Problem formulation}\label{formulation}
Let $\Omega$ be a rectangular domain in  $\mathbb{R}^2$ with boundaries $\Gamma_T, \Gamma_B, \Gamma_L$ and $\Gamma_R$ as shown in Figure~\ref{domain} such that $\bar{\Omega}=\Omega\cup\Gamma_{T}\cup\Gamma_{B}\cup\Gamma_{L}\cup\Gamma_{R}$, $\Omega=(0,a)\times (0,b)$ and $\Gamma_{T}\cap\Gamma_{B}\cap\Gamma_{L}\cap\Gamma_{R}=\emptyset$. Cauchy problem for Laplace equation is defined as,\\~\\
\indent Find $u(x)$ on $\Gamma_{B}$:
\begin{equation}
\label{inverse1}
\begin{cases}
\triangle u =\dfrac{\partial^2 u}{\partial x^2}+\dfrac{\partial^2 u}{\partial y^2}= 0 & in\; \Omega,\\
u = f(x) & on\; \Gamma_{T},\\
\dfrac{\partial u}{\partial n}=g(x) & on\; \Gamma_{T},
%u = 0 & on \; \Gamma_{L\cup R},
\end{cases}
\end{equation}
\noindent %with homogeneous Dirichlet, Neumann or Robin type of side boundaries
with homogeneous Dirichlet or Neumann side boundaries, $f$ and $g$ are given sufficiently smooth and $\frac{\partial }{\partial n}$ represents the normal derivative to the top boundary $\Gamma_{T}$. %The freedom to choose any of the homogeneous Dirichlet, Neumann or Robin side boundary conditions will become obvious once observer based algorithm is fully elaborated in coming sections. 
For consistent Cauchy data on $\Gamma_B$ problem (\ref{inverse1}) can be solved analytically \cite{b_a1}. However, the objective here is to explore the possibility of developing an observer-like iterative algorithm using space as time-like.

\vspace{1em}
\begin{figure}[thpb]
			\centering
%	\framebox{\parbox{4in}
%	{
	\begin{center}
		\begin{tikzpicture}%[framed]
			\begin{scope}
				\draw[fill=light-gray] (2,0) rectangle (6cm,2.5cm);				
				%\draw (-1,-2) rectangle (5,2.5);
				\put(70,30){$\Omega=(0,a)\times (0,b)$};
				\put(100,-12){$\Gamma_{B}$};
				\put(100,77){$\Gamma_{T}$};
				\put(42,30){${\Gamma_{L}}$};
				\put(35,-12){$(0,0)$};
				\put(35,75){$(0,b)$};
				\put(165,-12){$(a,0)$};
				\put(165,75){$(a,b)$};
				\put(173,30){${\Gamma_{R}}$};
			\end{scope}
		\end{tikzpicture}
	\end{center}
%	}
	\caption{Rectangular domain $\Omega$.}
	\label{domain}
\end{figure}
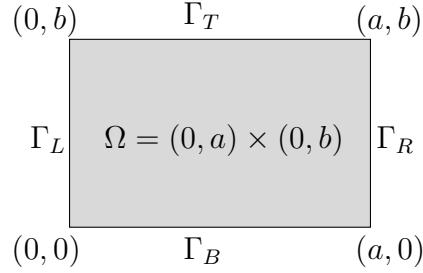

\subsection*{Change of variables:}

%\subsection*{A state-space-like problem formulation}

%\subsection{Abstract formulation}
We propose to write down the Laplace equation in rectangular coordinates as given in system (\ref{inverse1}) as a first order state equation by introducing two new auxiliary variables $\xi_1,\xi_2$ as follows,
\begin{equation}
\label{cases2}
\begin{cases} 
\xi_1(x,y)=u(x,y),\\
\xi_2(x,y)=\dfrac{\partial u}{\partial x },\\
\end{cases}
\end{equation}
and the resulting equation can now be written as,
\begin{equation}
\dfrac{\partial\xi}{\partial x} = \mathcal{A}\xi,
\end{equation}
where, 
\begin{equation}
\label{operator-a} \xi = \left( \begin{array}{c}
\xi_1(x,y) \\ \xi_2(x,y)
\end{array} \right), \quad \mathcal{A} = \left(\begin{array}{cc} 0 & 1 \\ -\dfrac{\partial^2}{\partial y^2} & 0 
\end{array}\right).
\end{equation}
$\xi_1$ and $\xi_2$ are called state variables and using these new variables, problem (\ref{inverse1}) can be written in equivalent form as,\\~\\
\indent Find $\xi_1(x,y)$ on $\Gamma_{B}$:
\begin{equation}
\label{inverse2}
\begin{cases}
\dfrac{\partial \xi}{\partial x} = \mathcal{A}\xi & in \;\Omega,\\
\mathcal{C}\xi(x)=\xi_1(x) = f(x) & on \;\Gamma_{T},\\
\dfrac{\partial \xi_1}{\partial y} = g(x) & on \;\Gamma_{T},%,\\
%\xi_{1} = 0 \; and/or \; \dfrac{\partial \xi_1}{\partial x}=0 & on \; \Gamma_{L\cup R}.
\end{cases}
\end{equation}
with homogeneous Dirichlet/Neumann side boundaries. 

\section{Observer Design} \label{obsv_design}
Boundary value problem as given in system of equations (\ref{inverse2}) has a first order state equation in variable $x$ and overdetermined data is available on $\Gamma_{T}$. Before the introduction of iterative observer equations,
let us assume that left hand boundary $\Gamma_{L}$ is connected to right hand boundary $\Gamma_{R}$ to have the notion of infinite time-like variable $x$ over the rectangular domain. The reason for having such an assumption is that we are trying to develop an observer using space as time-like and hoping that this observer will converge asymptotically in variable $x$. Let $m$ be a non-negative integer index of iteration over the domain $\Omega$ in horizontal direction. Let $x_m$, as given in Definition \ref{concat1}, represents $x\in[0,a)$ for the $m$-th iteration over the interval $[0,a)$.
%and different values of index of iteration $m$ correspond to different intervals of variable $x$ as follows,
%\begin{eqnarray}
%\label{iteration_index}
%m &=& 1\quad\quad x\in[0,a),\\
%m &=& 2\quad\quad x\in[a, 2a)\nonumber,\\
%m &=& 3\quad\quad x\in[2a, 3a)\nonumber,\\
%&\,\;\vdots & \;\quad\quad\quad \vdots\nonumber
%\end{eqnarray}
After introducing iteration index $m$, now an observer-like algorithm can be developed as follows,
\subsubsection*{Main result}
\newtheorem{th1}{Theorem}
\begin{th1}\label{theorem_1}
%For iteration index $m\geq 1$, 
For consistent Cauchy data, boundary value problem given in (\ref{observer1}) asymptotically ($m=1,\cdots, \infty$) converges to the true solution of boundary value problem (\ref{inverse2}).
%The forward space marching algorithm can be written as follows,
\begin{equation}
%\label{space_iterative}
\label{observer1}
\begin{cases}
\vspace{0.5em}
\dfrac{\partial}{\partial x}\hat{\xi}(x_m,y) = \mathcal{A}\hat{\xi}(x_m,y) - \mathcal{KC}(\hat{\xi}(x_m,y) - \xi) & in \;\Omega,\\
%\hat{\xi}_1 = \hat{f}(\theta) & on \;\Gamma_{out},\\
\vspace{0.5em}
\dfrac{\partial}{\partial y}\hat{\xi}_1(x_m,y) = g(x) & on \;\Gamma_{T},\\
\left(\dfrac{\partial^2}{\partial x^2}+\dfrac{\partial^2}{\partial y^2}\right)\hat{\xi}_1(x_m,y) = - \mathcal{KC}(\hat{\xi}(x_m,y)-\xi) & on \;\Gamma_{B},\\
%\hat{\xi}^{m=0} = \xi_0 & in \;\bar{\Omega}.
\hat{\xi}(x_m,y)\mid_{initial} = \hat{\xi}(x_{m-1},y) & in \;\bar{\Omega},
\end{cases}
\end{equation}
where `` $\hat{}$ '' represents estimated quantity and $\hat{\xi}(x_m,y)\mid_{initial}$ represents a bounded estimate over the whole domain $\bar{\Omega}$ at the start of $m$-th iteration. Algorithm starts at index $m=1$, which represents first iteration. $\hat{\xi}(x_0,y)$ is initial guess at the start of the first iteration over the whole domain $\bar{\Omega}$. Any bounded initial guess $\hat{\xi}(x_0,y)$ can be chosen. For each subsequent iteration, result of the previous iteration is used as initial estimate as given in the last equation in (\ref{observer1}).
%To be more obvious, different values of index of iteration $m$ correspond to different values of variable $\theta$ as follows,\\~\\
%\begin{eqnarray}
%\label{iteration_index}
%m &=& 1\quad\quad \theta\in[0,2\pi),\\
%m &=& 2\quad\quad \theta\in[2\pi, 4\pi)\nonumber,\\
%m &=& 3\quad\quad \theta\in[4\pi, 6\pi)\nonumber,\\
%&\,\;\vdots & \;\quad\quad\quad \vdots\nonumber
%\end{eqnarray}
Third equation in (\ref{observer1}) is the assumption that Laplace equation is valid on the bottom boundary and this provides necessary boundary condition required on $\Gamma_{B}$. $\mathcal{C}$ is the observation operator such that $\mathcal{C}\xi = \xi_1\mid_{\Gamma_{T}}$. $\mathcal{K}$ is the correction operator chosen in such a way that state estimation error on $\Gamma_{T}$ given by $(\mathcal{C}\hat{\xi}(x_m,y)-\mathcal{C}\xi)$ converges to zero asymptotically ($m=1,\cdots,\infty$). 
\end{th1}
%State estimation error and the design of correction operator $\mathcal{K}$ is explained in the next section.

\subsection{Preliminary analysis}

Before moving to the proof of theorem (\ref{theorem_1}) we note that the solution of first order equation in system (\ref{inverse2}) leads to the concept of semigroup generated by unbounded differential operator matrix $\mathcal{A}$. We study the exponential of $\mathcal{A}$ using the functional analysis framework from section \ref{section_notation}. 
%Further the basic definitions of observability for infinite-dimensional system are re-visited.
%%Further using the concept of observability for infinite dimensional systems, we establish that pair $\mathcal{(C, A)}$ is final state and exact observable.

\subsubsection{Semigroup generated by $\mathcal{A}$}

\begin{th1}
\label{theorem1}
Let $n\in\mathbb{Z}^{\star}$ (set of non zero integers), for $\mathcal{A}:\mathit{D}(\mathcal{A})\rightarrow X$ (as given in (\ref{operator-a}), (\ref{space-x}) and (\ref{space-domain})) there exists an infinite set of orthonormal eigenvectors ($\Phi_n$) and corresponding eigenvalues ($\lambda_n$). Furthermore $\mathcal{A}$ generates a strongly continuous semigroup for vectors 
$\left(\begin{array}{c}
p_1 \\ p_2
\end{array}\right)\in X$ if and only if decay rate of $\left\langle \left( \begin{array}{c} p_1\\p_2 \end{array} \right), \Phi_n \right\rangle$ is greater than the growth rate of $e^{\lambda_nx}$.
%such that,
%
%\begin{equation}
%	\left(\begin{array}{c}
%		P_1\\P_2
%	\end{array}\right) = \sum_{k=1}^{n} \left( \begin{array}{c}(p_1)_k\\(p_2)_k \end{array}\right),\quad (p_1)_k, (p_2)_k \in C^{\infty};n<\infty.
%\end{equation}

\end{th1}
\begin{proof}
Let, %$n\in\mathbb{Z}$ set of all integers such that,
\begin{equation}
\label{eigenvector}
\Phi_n(y) = \rho_n\left( \begin{array}{c}
\alpha_n\varphi_n(y) \\ \beta_n\varphi_n(y)
\end{array} \right),
\end{equation}
be the orthonormal set of eigenvectors of operator $\mathcal{A}$ and $\lambda_n$ be the eigenvalues such that,

\begin{eqnarray}
\label{eigen1}
\mathcal{A}\Phi_n &=& \lambda_n \Phi_n,\\ \nonumber
%\mathcal{A}\left( \begin{array}{c}
%\alpha_n\varphi_n \\ \beta_n\varphi_n
%\end{array} \right) &= \lambda_n \left( \begin{array}{c}
%\alpha_n\varphi_n \\ \beta_n\varphi_n
%\end{array} \right),\\ \nonumber
\left( \begin{array}{c}
\beta_n\varphi_n \\ -\dfrac{\partial^2}{\partial y^2} \left(\alpha_n\varphi_n\right)
\end{array} \right) &=& \lambda_n \left( \begin{array}{c}
\alpha_n\varphi_n \\ \beta_n\varphi_n
\end{array} \right).
\end{eqnarray}
Assuming that $\alpha_n,\beta_n$ do not depend on $y$, second equation above suggests that we are interested in finding the eigenfunctions of Laplacian operator $\dfrac{-\partial ^2}{\partial y^2}$. This signifies that unknown eigenfunctions $\varphi_n\in C^{\infty}$. Solving two equations in (\ref{eigen1}) gives,
\begin{eqnarray}
\label{eigen2}
\lambda_n &=& \dfrac{\beta_n}{\alpha_n},\\
\label{varphi}\varphi_n(y) &=& C_1\cos\left(\lambda_n y \right),
\end{eqnarray}
where $\alpha_n$ and $\beta_n$ depend on $n$. $C_1$ and $\lambda_n$ are chosen such that $\varphi_n(y)$ in (\ref{varphi}) forms an orthonormal basis in $L^2\left( 0,\frac{\pi}{4} \right)$, with $C_1=-\sqrt{\frac{8}{\pi}}, \alpha_n=1$ and $\beta_n=\lambda_n=6-8n$.
%\begin{equation}
%\label{phi}
%\varphi_n(y) = -\sqrt{\frac{8}{\pi}} \cos\left(\lambda_n y \right),
%\end{equation}
%is an orthonormal basis in $L^2\left( 0,\frac{\pi}{4} \right)$. 
Finally an orthonormal set of eigenvectors can be formed in $X$ with respect to norm defined by (\ref{norm}) as,
\begin{equation}
\label{basis_X}
\Phi_n(y) = \rho_n \phi_n(y) = \rho_n \left( \begin{array}{c}
\alpha_n\varphi_n(y) \\ \beta_n\varphi_n(y)
\end{array} \right),
\end{equation}
where, $|\rho_n|=\dfrac{1}{\left|\sqrt{2}\beta_n\right|}>0$, is a normalization factor.
%Finally (\ref{eigenvector}) are orthonormal eigenvectors in $X$ with norm defined by (\ref{norm}), $\varphi_n$ given by (\ref{phi}) and,
%\begin{eqnarray}
%\alpha_n	&=& 1,\\
%\beta_n 	&=&	6-8n,
%\end{eqnarray}
%hence,
%\begin{equation}
%%\boxed{
%\lambda_n = 6-8n;\quad  \forall n\in\mathbb{Z}
%%}
%\;.
%\end{equation}
% are infinite set of eigenvalues and $\phi_n$ are corresponding orthonormal eigenvectors in $X$ with normalization factor $\rho = 1/ \left| (6-8n) \sqrt{1+\pi/8}\right|$. 
Now let us try to write semigroup generated by operator matrix $\mathcal{A}$ can be written as an infinite series,
\begin{equation} \label{series}
\sum_{n\in\mathbb{Z}^{\star}} e^{\lambda_n x} \left\langle \left(\begin{array}{c}
p_1(y)\\p_2(y)
\end{array}\right),\Phi_n(y)\right\rangle\Phi_n(y), \quad\quad\forall\;\left(\begin{array}{c}
p_1\\p_2
\end{array}\right)\in X.
\end{equation}
For $x=0$ the above infinite series is clearly convergent, whereas for $x\rightarrow 0^+$ the limit does not exist. Further we note that above series expression (\ref{series}) satisfies identity and semigroup properties as given in Definition \ref{definition1}, however it lacks strong continuity, except if we assume that the projection terms in angle brackets above decay faster than the growth rate of $e^{\lambda_n x}$. This condition true for a wide range of analytical functions that have a finite number of non-zero projections on the basis $\Phi_n$.
%are non-zero for some large $n\in\mathbb{Z}:n<\infty$. Now with the introduction of this assumption the limit $x\rightarrow 0^+$ exists. 
This also reveals a historical fact about solving Cauchy problems for steady state heat equation that unique and stable solutions does not exist for non-smooth data \cite{b_a1}. Thus with this additional smoothness assumption 
%that $\left(\begin{array}{c} p_1\\p_2 \end{array}\right) \in X$ 
equation (\ref{semigroup1}) represents the strongly continuous semigroup generated by operator matrix $\mathcal{A}$.
\begin{eqnarray} \label{semigroup1}
\mathbb{T}_{x} \left(\begin{array}{c}
p_1(y)\\p_2(y)
\end{array}\right)&=&\sum_{n\in\mathbb{Z}^{\star}} e^{\lambda_n x} \left\langle \left(\begin{array}{c}
p_1(y)\\p_2(y)
\end{array}\right),\Phi_n(y)\right\rangle\Phi_n(y), \nonumber \\ & &\forall\;\left(\begin{array}{c}
p_1\\p_2
\end{array}\right)\in X.
\end{eqnarray}
This implies,
\begin{equation}
\begin{split}
\label{semigroup}
\mathbb{T}_{x}\left(\begin{array}{c}
p_1(y)\\p_2(y)
\end{array}\right)&=\sum_{n\in\mathbb{Z}^{\star}} e^{\lambda_n x} \rho_n \left(\alpha_n\left\langle\dfrac{dp_1}{dy},\frac{d\varphi_n}{dy}\right\rangle_{\mathrm{L}^2\left(0,\frac{\pi}{4}\right)}\right.
\\
& \left. +\alpha_n\left\langle p_1,\varphi_n\right\rangle_{\mathrm{L}^2\left(0,\frac{\pi}{4}\right)}
+\beta_n\left\langle p_2,\varphi_n\right\rangle_{\mathrm{L}^2\left(0,\frac{\pi}{4}\right)}\right)\Phi_n,\nonumber \\ &\forall\;\left(\begin{array}{c}
p_1\\p_2
\end{array}\right)\in X.
\end{split}
\end{equation}
Q.E.D.
\end{proof}

\subsubsection{System observability}
%Let $\bar{x}>0$ and $\Psi_{\bar{x}}\in\mathcal{L}(X,L^2\left( [0,\bar{x}];Y\right))$ be the output map operator with $Y=\mathbb{R}$ such that,
%\begin{equation}
%\left(\Psi_{\bar{x}}\xi(0)\right)(x) = \begin{cases}
%\mathcal{C}\mathbb{T}_{\bar{x}}\xi(0) & for \;x\in[0,\bar{x}],\\
%0 & for \;x>\bar{x}
%\end{cases}
%\end{equation}
%
%%\theoremstyle{definition}
%%\newtheorem{definition}{Definition}
%\begin{definition}
%Pair $\mathcal{(C,A)}$ is exectly observable in some horizontal distance (time) $\bar{x}$ if $\Psi_{\bar{x}}$ is bounded from below.
%% Pair $(\mathcal{A,C})$ is final state observable in some horizontal distance (time) $\bar{x}$ if there exists a $k_{\bar{x}}>0$ such that $\| \Psi_{\bar{x}}\xi(0) \| \geq k_{\bar{x}} \| \mathbb{T}_{\bar{x}}\xi(0) \|$.
%\end{definition}
%This definition of exact observability of pair $(\mathcal{C,A})$ is equivalent to the fact that there exists $k_{\bar{x}}>0$ such that,
%\begin{equation}
%\int_0^{\bar{x}}\left\| C\mathbb{T}_{x}\xi(0) \right\|^2 dx \geq k_{\bar{x}}^2 \left\| \xi(0) \right\|^2
%\end{equation}

%\theoremstyle{proposition}
\newtheorem{proposition}{Proposition}

\begin{proposition}\label{propos_1}
Let $\mathbb{T}$ be the strongly continuous semigroup generated by operator matrix $\mathcal{A}$ under the assumptions as given in theorem \ref{theorem1}. For any arbitrarily small $\epsilon>0$ such that if $\left| \bar{x}-x \right|<\epsilon$, the pair $(\mathcal{C},\mathcal{A})$ is final state observable (and further exactly observable using Note 1 from section \ref{section_notation}) in time-like interval $\left|\bar{x}-x \right|>0$ at a particular $x$, where $\mathcal{C}\in\mathcal{L}(X,Y)$ and $Y=\mathbb{R}$.
\end{proposition}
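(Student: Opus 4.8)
The plan is to diagonalise the problem in the orthonormal eigenbasis $\{\Phi_n\}$ constructed in Theorem~\ref{theorem1} and thereby reduce both sides of the final-state inequality (\ref{final_obsv}) to explicit series in the Fourier coefficients of $\xi(0)$. Writing $\xi(0)=\sum_{n}c_n\Phi_n$ with $c_n=\left\langle\xi(0),\Phi_n\right\rangle$, the semigroup formula (\ref{semigroup1}) gives $\mathbb{T}_x\xi(0)=\sum_n e^{\lambda_n x}c_n\Phi_n$, so that by orthonormality $\|\mathbb{T}_{\bar x}\xi(0)\|^2=\sum_n e^{2\lambda_n\bar x}|c_n|^2$, while the scalar output along the trajectory is $\mathcal{C}\mathbb{T}_x\xi(0)=\sum_n e^{\lambda_n x}c_n d_n$ with $d_n:=\mathcal{C}\Phi_n\in\mathbb{R}$. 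The whole statement then rests on two facts: that every $d_n$ is nonzero, so that the output sees every eigenmode, and that the scalar exponentials $\{e^{\lambda_n x}\}$ are independent enough on a short interval to bound the state norm from below.

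First I would compute $d_n=\mathcal{C}\Phi_n$ explicitly. Since $\mathcal{C}\xi=\xi_1$ evaluated on $\Gamma_T$, which in the $s$-variable is the point $s=0$ where the Neumann condition $\tfrac{d}{ds}(0)=0$ defining $H^1_{\Gamma_T}$ is imposed, and since the first component of $\Phi_n$ in (\ref{basis_X}) is $\rho_n\alpha_n\varphi_n$ with $\varphi_n(s)=-\sqrt{8/\pi}\cos(\lambda_n s)$, point evaluation yields $d_n=\rho_n\alpha_n\varphi_n(0)=-\rho_n\alpha_n\sqrt{8/\pi}$. Because $\rho_n=1/(\sqrt2\,\beta_n)$ and $\beta_n=\lambda_n=6-8n\neq0$ for every integer $n$, each $d_n$ is nonzero; this is the decisive structural point, and I would stress that the evaluation point matters, since at $s=\pi/4$ all the cosines vanish and observability would fail.

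The hard part will be the lower bound. On the space $V$ of (\ref{space-v}) the expansion of $\xi(0)$ involves only finitely many active modes, so the output is a finite exponential sum $P(x)=\sum_n e^{\lambda_n x}c_n d_n$ in the real, pairwise-distinct frequencies $\lambda_n$. I would first argue injectivity of $\xi(0)\mapsto P|_{[0,\bar x]}$: a finite combination of the linearly independent functions $\{e^{\lambda_n x}\}$ vanishing on an interval must have all coefficients $c_n d_n=0$, and since $d_n\neq0$ this forces $c_n=0$, i.e. $\mathrm{Ker}\,\Psi_{\bar x}=\{0\}$. On the effectively finite-dimensional $V$, injectivity of the bounded operator $\Psi_{\bar x}$ is equivalent to boundedness from below, which is exactly Definition~\ref{definition2}. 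To pin down the quantitative constant $k_{\bar x}$ in (\ref{final_obsv}) I would compare the Gram matrix of the family $\{d_n\,e^{\lambda_n\,\cdot}\}$ in $L^2(0,\bar x)$ with that of $\{e^{\lambda_n\bar x}\}$, the smallest generalised eigenvalue of the resulting pencil furnishing $k_{\bar x}^2$. Controlling this pencil is the main obstacle: for small $\bar x$ the exponentials $e^{\lambda_n x}$ are nearly parallel on $[0,\bar x]$ and the Gram matrix is ill-conditioned, so an Ingham-type inequality or an explicit Vandermonde/Cauchy-determinant estimate is needed to keep $k_{\bar x}$ bounded away from zero.

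Finally, to upgrade final-state to exact observability I would invoke Remark~\ref{remark1}: as $\bar x\to0$ the strong continuity of $\mathbb{T}$ established in Theorem~\ref{theorem1} gives $\mathbb{T}_{\bar x}\xi(0)\to\xi(0)$, so the right-hand sides of (\ref{final_obsv}) and (\ref{exact_obsv}) coincide in the limit. Hence there is an $\epsilon>0$ such that for every $x$ with $|\bar x-x|<\epsilon$ the final-state bound at scale $\bar x$ already yields the integral estimate $\int_0^{\bar x}\|\Psi_x\xi(0)\|^2\,dx\ge k_{\bar x}^2\|\xi(0)\|^2$ of (\ref{exact_obsv}), i.e. $(\mathcal{C},\mathcal{A})$ is exactly observable in $\bar x$ at that $x$. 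This is precisely where the ``arbitrarily small $\epsilon$'' hypothesis and the restriction to the dense subspace $V$, on which $\mathbb{T}$ is genuinely strongly continuous, enter.
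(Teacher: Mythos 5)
Your skeleton is the same as the paper's: expand $\xi(0)$ in the orthonormal eigenbasis $\Phi_n$, observe that $\mathcal{C}\Phi_n=\rho_n\alpha_n\varphi_n(0)\neq 0$ for every $n$ (the paper writes $\mathcal{C}\Phi_n=-\rho_n$, dropping the factor $\sqrt{8/\pi}$ that you correctly keep), pass from the resulting scalar exponential series to an observability inequality, and invoke Remark~\ref{remark1} at the end exactly as the paper does. Where you genuinely diverge is at the crucial lower bound. The paper obtains it by a direct manipulation of the series: it replaces $\left|\left\langle \xi(0),\phi_n\right\rangle\right|$ by $\left\|\xi(0)\right\|^2\left\|\phi_n\right\|^2$ inside the sum and claims the result is a lower bound, i.e.\ it applies Cauchy--Schwarz in the wrong direction (the inner product is bounded \emph{above}, not below, by the product of norms), so the paper's chain of inequalities is not valid as written. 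Your substitute --- $d_n\neq 0$ for all $n$, linear independence of distinct real exponentials $e^{\lambda_n x}$ on an interval giving $\mathrm{Ker}\,\Psi_{\bar x}=\{0\}$, then positivity of the Gram matrix on a finite-dimensional space producing the constant $k_{\bar x}$ --- is sound, and it repairs precisely the step the paper fumbles. Your observation that observability would fail if the output were evaluated at $s=\pi/4$, where every $\varphi_n$ vanishes, is a genuine structural insight absent from the paper.

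Two caveats. First, the ``main obstacle'' you flag --- keeping $k_{\bar x}$ bounded away from zero as $\bar x\to 0$ via Ingham or Vandermonde estimates --- is not actually required: Definitions~\ref{definition2} and~\ref{definition3} allow the constant $k_{\bar x}$ to depend on $\bar x$, so for each fixed $\bar x>0$ and finitely many active modes, positive-definiteness of the Gram matrix of the linearly independent family $\left\{d_n e^{\lambda_n\,\cdot}\right\}$ in $L^2(0,\bar x)$ already finishes the argument; no uniformity in $\bar x$ is needed. Second, and this is the real gap you should close explicitly: your passage from injectivity to boundedness below needs $V$ to be finite-dimensional, but the definition~(\ref{space-v}) only annihilates the coefficients for $n>N$, leaving the infinitely many modes $n\le N$ (with $\lambda_n=6-8n\to+\infty$) active, so $V$ as literally defined is infinite-dimensional. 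You must either adopt the (presumably intended) reading that elements of $V$ have only finitely many nonzero modes, or prove a lower bound uniform over all finite sub-sums --- a substantially harder statement that your sketch does not address.
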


\begin{proof}
Let $\xi(0)$ be the initial guess at $x=0$, given by,
\begin{equation}
\xi(0) = \left(\begin{array}{c}
\xi_1(0)\\ \xi_2(0)
\end{array}\right)=\left(\begin{array}{c}
p_1(y)\\p_2(y)
\end{array}\right).
\end{equation}
$\Phi_{n}(y)$ for $n\in\mathbb{Z}^{\star}$ be an orthonormal basis in $X$. Let us first prove the final state observability condition for a general mode $\Phi_{n'}$ with corresponding eigenvalue $\lambda_{n'}$ as follows,\newline
For all $\Phi_{n'}\in X$ and $n'\in\mathbb{Z}^{\star}$,
\begin{eqnarray}\label{obsv_(1)}
\left\| \mathbb{T}_{x} \Phi_{n} \right\|_{X}&=&\left\|\sum_{n\in\mathbb{Z}^{\star}}e^{\lambda_{n} x}\left\langle \Phi_{n'},\Phi_{n} \right\rangle\Phi_{n}\right\|_{X}, \nonumber \\ %\forall\;\Phi_{n'}\in X,\;\;n'\in\mathbb{Z}^{\star},\nonumber\\
&=&e^{\lambda_{n} x} \left\| \Phi_{n} \right\|_{X},\nonumber\\
&=&e^{\lambda_{n} x},
\end{eqnarray}
also,
\begin{eqnarray}\label{obsv_(2)}
\left\| \mathcal{C}\mathbb{T}_{x} \Phi_{n} \right\|_{{Y}} &=& \left\|\sum_{n\in\mathbb{Z}^{\star}}e^{\lambda_{n} x}\left\langle \Phi_{n'},\Phi_{n} \right\rangle\mathcal{C}\Phi_{n}\right\|_{{Y}}, \nonumber \\ %\forall\;\Phi_{n'}\in X,\;\;n'\in\mathbb{Z}^{\star},\nonumber\\
 &=& e^{\lambda_{n} x} \left\| \mathcal{C}\Phi_{n} \right\|_{{Y}},\nonumber\\
% &=& e^{\lambda_{n} x} | \rho_{n} |,\nonumber\\
 &=& e^{\lambda_{n} x} | \rho_{n} |.
\end{eqnarray}
Comparing equations (\ref{obsv_(1)}) and (\ref{obsv_(2)}) implies,
\begin{equation} \label{final_obsv}
\left\| \mathcal{C}\mathbb{T}_{x} \Phi_{n} \right\|_{Y} \geq k_1^{\star} \left\| \mathbb{T}_{x} \Phi_{n} \right\|_{X},
\end{equation}
where $k_1^{\star}>0$, if and only if,
\begin{equation}
k_1^{\star} \leq |\rho_{n}|,
\end{equation}
for a particular choice of $\Phi_{n}$ there always exists $k_1^{\star}$ such that final state observability condition (\ref{final_obsv_1}) is satisfied.\\
$\mathcal{C}\in\mathcal{L}(X,Y)$ is a linear boundary observation operator. Now let $\xi(0)=\sum_{n\in\mathbb{Z}^{\star}}\gamma_{n}\Phi_{n}$ where $\gamma_{n}$ are projection terms whose decay rate is greater than the growth rate of $e^{\lambda_{n}x}$ with $\lambda_{n}$ as eigenvalues of $\mathcal{A}$ corresponding to eigenvectors $\Phi_{n}$. Clearly $\sum_{n\in \mathbb{Z}^{\star}}\gamma_{n}$ and $\sum_{n\in\mathbb{Z}^{\star}}\rho_{n}$ are bounded from above, hence,
\begin{equation} \label{final_obsv_2}
\left\| \mathcal{C}\mathbb{T}_{x} \xi(0) \right\|_{Y} \geq k_2^{\star} \left\| \mathbb{T}_{x} \xi(0) \right\|_{X},
\end{equation}
where $k_1^{\star},k_2^{\star}$ both are independent of $x$. Further using note 1 for arbitrarily small $\epsilon$ pair ($\mathcal{C},\mathcal{A}$) is exactly observable.
Q.E.D.
\end{proof}

\subsection{Convergence analysis}
After establishing the concept of strongly continuous semigroup generated by $\mathcal{A}$ and the fact that pair $\mathcal{(C, A)}$ is final state and exact observable, we are all set to prove the main result.
\subsubsection*{Proof of the main result}
\begin{proof}
Let us define state estimation error $\tilde{e}(x_m,y)$ as the difference of  true state $\xi(x,y)$ from the one estimated $\hat{\xi}(x_m,y)$,
\begin{equation}
\label{true1}
\tilde{e} = \hat{\xi}-\xi = \left( \begin{array}{c}
\tilde{e}_1(x_m,y) \\ \tilde{e}_2(x_m,y)
\end{array} \right) = \left( \begin{array}{c}
\hat{\xi}_1(x_m,y) - \xi_1(x,y)\\ \hat{\xi}_2(x_m,y) - \xi_2(x,y)
\end{array} \right).
\end{equation}
%Let us suppose following nice mixed boundary value problem for Laplace equation,
%\begin{equation}
%\label{analytical_sol}
%\begin{cases}
%\vspace{0.5em}
%\dfrac{\partial \xi}{\partial x} = \mathcal{A}\xi & in \;\Omega,\\
%%\xi_1 = f(\theta) & on \;\Gamma_{out},\\
%\dfrac{\partial \xi_1}{\partial y} = g(x) & on \;\Gamma_{T},\\
%\xi_1 = h(x) & on \;\Gamma_{B},\\
%\end{cases}
%\end{equation}
%with homogeneous Dirichlet and/or Neumann boundary conditions on $\Gamma_{L\cup R}$. 
Solution of the boundary value problem (\ref{inverse2}) with consistent boundary data provides $u=\xi_1$ over the whole domain $\bar{\Omega}$. Boundary value problem for the state estimation error can be given by subtracting problem (\ref{inverse2}) from the state observer equations (\ref{observer1}) as follows, \\~\\
For $m\geq 1$, find $\tilde{e}(x_m,y)=(\hat{\xi}(x_m,y)-\xi(x,y))\in \bar{\Omega}$:
\begin{equation}
\begin{cases}
\vspace*{0.6em}\dfrac{\partial}{\partial x} \tilde{e}(x_m,y) = (\mathcal{A-KC})\tilde{e}(x_m,y) & in \;\Omega,\\
%e_1 = 0 & on \;\Gamma_{out},\\
\vspace*{0.6em}\dfrac{\partial}{\partial y}\tilde{e}_1(x_m,y) = 0 & on \;\Gamma_{T},\\
\vspace*{0.6em}\left(\dfrac{\partial^2}{\partial x^2}+\dfrac{\partial^2}{\partial y^2}\right)\left( \hat{\xi}_1(x_m)-h(x) \right) = - \mathcal{KC} \tilde{e}(x_m) & on \;\Gamma_{B},\\
\tilde{e}(x_m,y)\mid_{initial} = \tilde{e}(x_{m-1},y) & in \;\bar{\Omega}.
\end{cases}
\end{equation}
Here $h(x)$ is the true analytical solution on $\Gamma_{B}$ using consistent Cauchy data. Further using the assumption that Laplace equation is valid on $\Gamma_{B}$, the above system of error dynamic equation can also be written in an equivalent form as,\\~\\
For $m\geq 1$, find $\tilde{e}(x_m,y)\in \bar{\Omega}$:
\begin{equation}
\label{error2}
\begin{cases}
\vspace*{0.6em}\dfrac{\partial}{\partial x} \tilde{e}(x_m,y) = (\mathcal{A-KC})\tilde{e}(x_m,y) & in \;\bar{\Omega}\backslash \Gamma_{T},\\
%e = 0 & on \;\Gamma_{out},\\
\vspace*{0.6em}\dfrac{\partial}{\partial y}\tilde{e}(x_m,y) = 0 & on \; \Gamma_{T},\\
\tilde{e}(x_m,y)\mid_{initial} = \tilde{e}(x_{m-1},y) & in \;\bar{\Omega}.
\end{cases}
\end{equation}
First equation in (\ref{error2}) is a system of ODEs in variable $x$ and solution to this system has to do with the exponential or the semigroup generated by operator matrix $\mathcal{A-KC}$. Let us denote this semigroup with $\mathbb{S}$. Then solution to above system of ODEs can be written as,
%First equation in (\ref{error2}) is an ODE in variable $x$ and solution to this ODE problem is a decaying exponential as follows,
\begin{equation}
%\tilde{e}(x_m,.) = \mathrm{e}^{(\mathcal{A-KC}) x_m}\tilde{e}(x_0,.) \quad\quad m\geq 1,
\tilde{e}(x_m,.) =  \mathbb{S}_{x_m}\tilde{e}(x_0,.) \quad\quad m\geq 1,
\end{equation}
for a particular iteration index $m$, $x_m$ is $x\in[0,a)$ over $m$-th iteration. Given that under certain conditions semigroup generated by $\mathcal{A}$ is strongly continuous, the observer gain $\mathcal{K}$ can be chosen in a way that $\mathcal{A-KC}$ is dissipative. Then $\mathbb{S}_{x_m}$ will decay exponentially and state estimation error $\tilde{e}(x_m,.)$, for a number of iterations over the whole domain, asymptotically converges to zero for any bounded initial value of $\tilde{e}(x_0,.)$.
%
%However given that under certain conditions semigroup generated by $\mathcal{A}$ is strongly continuous, the semigroup generated by $\mathcal{A-KC}$ can be made exponentially decaying by making operator product $\mathcal{KC}$ positive definite and enough large.
\end{proof}

\subsection{Existence of observer gain $\mathcal{K}$}
Let Hilbert space $X$ as given in equations (\ref{space-x}) and (\ref{norm}) be composed of two mutually exclusive parts as,
\begin{equation}
	X = X_1 \oplus X_2,
\end{equation}
where $X_1$ satisfy conditions as stated in theorem \ref{theorem1} such that $\mathcal{A}$ forms a strongly continuous semigroup and $X_1$ and $X_2$ both make the full space $X$. Following theorem provides conditions on the existence of operator gain $\mathcal{K}$.
\begin{th1}
\label{theorem2}
Under conditions as stated in theorem \ref{theorem1}, let $\mathcal{A}$ as given in equation (\ref{operator-a}) be the generator of a strongly continuous semigroup, $\mathcal{C}\in L(X_1,Y)$ be an observation operator and $Y=\mathbb{R}$), then the following assertions are equivalent.
\begin{enumerate}
\item There exists a positive definite self-adjoint operator product $\mathcal{KC}\in L(X_1)$ where $\mathcal{K}\in L(Y,X_1)$ such that $\mathcal{A-KC}$ generates a maximally dissipative semigroup.
\item There exists arbitrarily small $\epsilon>0$ such that if $\left\| \bar{x}-x \right\|<\epsilon$ then pair $\mathcal{(C,A)}$ is exactly observable in time-like interval $\epsilon$.
\end{enumerate}

\begin{proof}
Given self-adjoint positive definite operator product $\mathcal{KC}\in L(X_1)$, let us denote by $\mathbb{S}$ and $\mathbb{T}$ the semigroups generated by $\mathcal{A-KC}$ and $\mathcal{A}$ respectively.\\~\\
$\boxed{1 \Rightarrow 2}$:\\~\\
Assume $\mathbb{S}$ is dissipative, let us show the observability inequality, that is, there exists $\bar{x}, k>0$ such that,
\begin{equation}\label{eq25}
\int_0^{\bar{x}} \left\| \mathcal{C}\mathbb{T}_x e_0 \right\|^2 dx \geq k_{\bar{x}}^2 \left\| e_0 \right\|^2 \quad \forall e_0\in X_1,
\end{equation}
$\mathcal{A}$ is densely defined so the above inequality is enough to prove exact observability. Given $e_0 \in D(\mathcal{A})$, $e(x)=\mathbb{S}_x e_0$ presents the unique solution of,
\begin{equation}
\label{error_eq}
\begin{cases}
\dfrac{\partial e}{\partial x} = \mathcal{(A-KC)} e(x),\\
e(0) = e_0.
\end{cases}
\end{equation}
Multiplying first equation in (\ref{error_eq}) by $e(x)$,
\begin{eqnarray}
\label{eq37}\dfrac{1}{2} \dfrac{d}{dx}\left\| e(x) \right\|^2 &=& Re \left\langle \dfrac{\partial e}{\partial x}, e(x) \right\rangle,\nonumber\\
&=& Re \left\langle (\mathcal{A-KC})e(x), e(x) \right\rangle,
\end{eqnarray}
in this part we assume that $\mathcal{A-KC}$ is m-dissipative,
% and further integration from $0$ to $\bar{x}$ gives,\\
%\begin{eqnarray}
%\dfrac{1}{2} \int_0^{\bar{x}} \dfrac{d}{dx} \left\| e(x) \right\|^2 dx & = & -\int_0^{\bar{x}} \alpha^2 dx,\nonumber\\
%\label{eq33}
%\left\| e_0 \right\|^2 - \left\| e(\bar{x})\right\|^2 & = & 2\int_0^{\bar{x}} \alpha^2 dx.
%\end{eqnarray}
\begin{eqnarray}
\label{eq32}
\dfrac{d}{dx} \left\| e(x) \right\|^2 & \leq & 0,
%\\
%\label{eq33}
%\left\| e(\bar{x}) \right\|^2 - \left\| e_0\right\|^2 & \leq & 0.
\end{eqnarray}
%where $Re \left\langle (\mathcal{A-KC})e(x), e(x) \right\rangle = -\alpha^2$, as $\mathcal{A-KC}$ generates a dissipative semigroup and $\alpha^2 >0$. 
Let $e(x) = \gamma(x) + \zeta(x)$ such that $\gamma=\mathbb{T}_x e_0$ is the solution of,
% $\|e\|^2>\left\{\|\gamma\|^2;\|\zeta\|^2\right\}$ and $\left\langle \gamma, \zeta\right\rangle>0$, where $\gamma=\mathbb{T}_x e_0$ is the solution of,
\begin{equation}
\label{gamma}
\begin{cases}
\dfrac{\partial \gamma}{\partial x} = \mathcal{A} \gamma(x),\\
\gamma(0) = e_0,
\end{cases}
\end{equation}
and $\zeta$ is the solution of,
\begin{equation}
\label{gamma_sys}
\begin{cases}
\dfrac{\partial \zeta}{\partial x} = \mathcal{A}\zeta(x) - \mathcal{KC} e(x),\\
\zeta(0) = 0.
\end{cases}
\end{equation}
Further we have that $\mathcal{KC}$ is positive definite,
\begin{eqnarray}
	\label{eq40} 0 \leq Re\left\langle \mathcal{KC}\gamma, \mathcal{KC}\gamma \right\rangle \leq  \left\| \mathcal{KC}\gamma(x) \right\|_{X_1}^2.
\end{eqnarray}
Combining equations (\ref{eq32}) and (\ref{eq40}),
\begin{eqnarray}
	\dfrac{d}{dx} \left\| e(x) \right\|^2 & \leq & \left\| \mathcal{KC}\gamma(x) \right\|_{X_1}^2,
\end{eqnarray}
integrating both sides,
\begin{eqnarray}
	\dfrac{1}{2} \int_{0}^{\bar{x}} \dfrac{d}{dx} \left\| e(x) \right\|^2 dx & \leq & \dfrac{1}{2} \int_{0}^{\bar{x}} \left\| \mathcal{KC}\gamma(x) \right\|_{X_1}^2 dx,\nonumber\\
	2\left(\left\| e(\bar{x}) \right\|^2 - \left\| e_0\right\|^2 \right) & \leq & \int_{0}^{\bar{x}} \left\| \mathcal{KC}\gamma(x) \right\|_{X_1}^2 dx,\nonumber\\
%	2 \left\| e(\bar{x}) \right\|_{X_1}^2 & \leq & \int_{0}^{\bar{x}} \left\| \mathcal{KC}\gamma(x) \right\|_{X_1}^2 dx,\nonumber\\
\end{eqnarray}
finally we have,
\begin{eqnarray}
	k \left\| e(\bar{x}) \right\|_{X_1}^2 & \leq & \int_{0}^{\bar{x}} \left\| \mathcal{C}\gamma(x) \right\|_{Y}^2 dx,
\end{eqnarray}
where $k=\dfrac{2}{\| \mathcal{K} \|^2} >0$ is independent of $\bar{x}$. For arbitrarily small time-like interval $\bar{x}-0=\bar{x}=\epsilon$, above inequality is same as observability inequality (\ref{eq25}).
\\~\\
$\boxed{2 \Rightarrow 1}$:\\~\\
We have $e(x) = \gamma(x) + \zeta(x)$, where $\gamma$ is the solution of open loop system (\ref{gamma}) and $e(x)$ is the solution of closed loop feedback system (\ref{error_eq}), we have that,
\begin{eqnarray}
\left\langle(\mathcal{A-KC})\gamma(x),\gamma(x)\right\rangle &\geq & \left\langle(\mathcal{A-KC}) e(x), e(x)\right\rangle,
\end{eqnarray}
multiplying with $-1$ and using inequality $(\ref{eq37})$,
\begin{equation}
	\left\langle(\mathcal{KC})\gamma(x),\gamma(x)\right\rangle \leq -\dfrac{1}{2} \dfrac{d}{dx}\left\| e(x) \right\|^2 + \left\langle(\mathcal{A})\gamma(x),\gamma(x)\right\rangle,
\end{equation}
integrating both sides from $0$ to $\bar{x}$ and a positive $\alpha>1$ such that,
\begin{eqnarray}
	\label{47}\int_{0}^{\bar{x}} \left\langle(\mathcal{KC})\gamma(x),\gamma(x)\right\rangle dx &\leq& \alpha\left(\left\| e_0 \right\|^2 - \left\| e(\bar{x})\right\|^2\right) \nonumber \\ & &+ \int_0^{\bar{x}} \left\langle \mathcal{A}\gamma(x), \gamma(x) \right\rangle dx,
\end{eqnarray}
Further we can write,
\begin{eqnarray}
Re \left\langle \mathcal{A}\gamma, \gamma \right\rangle &=& Re\left\langle \dfrac{\partial \gamma}{\partial x}, \gamma \right\rangle, \nonumber \\ \nonumber
 %&=& \dfrac{1}{2} \dfrac{d}{dx} \left\| \gamma(x) \right\|^2,\\ \nonumber
 &=& \dfrac{1}{2} \dfrac{d}{dx} \left\| e(x) - \zeta(x) \right\|^2,\\
 \nonumber & \leq & \dfrac{1}{2} \dfrac{d}{dx} \left\| e(x) \right\|^2 + \dfrac{1}{2} \dfrac{d}{dx} \left\| \zeta(x) \right\|^2,\\
 & \leq & \dfrac{d}{dx} \left\| e(x) \right\|^2 + \dfrac{1}{2} \dfrac{d}{dx} \left\| \gamma(x) \right\|^2,
\end{eqnarray}
given that $\mathcal{A}$ generates a strongly continuous semigroup, there exists $\beta>1$ such that,
\begin{equation}
	Re \left\langle \mathcal{A}\gamma, \gamma \right\rangle \leq \beta \dfrac{d}{dx} \left\| e(x) \right\|^2,
\end{equation}
integrating both sides,
\begin{equation}
	\int_{0}^{\bar{x}} Re \left\langle \mathcal{A}\gamma, \gamma \right\rangle dx \leq -\beta \left( \left\| e_0 \right\|^2 - \left\| e(\bar{x}) \right\|^2 \right),
\end{equation}
combining above inequality with (\ref{47}),
\begin{equation}
	\label{47}\int_{0}^{\bar{x}} \left\langle(\mathcal{KC})\gamma(x),\gamma(x)\right\rangle dx \leq (\alpha-\beta)\left(\left\| e_0 \right\|^2 - \left\| e(\bar{x})\right\|^2\right),
\end{equation}
$\alpha$ and $\beta$ can be chosen appropriately large, let us take $\alpha-\beta=2$, we have
\begin{eqnarray}
\label{eq47}
\int_0^{\bar{x}} \left\langle \mathcal{KC}\gamma(x),\gamma(x) \right\rangle dx &\leq & 2\left( \left\| e_0 \right\|^2 - \left\| e(\bar{x}) \right\|^2 \right), \nonumber \\
\int_0^{\bar{x}} \left\langle \mathcal{C}\gamma(x), \mathcal{C}\gamma(x) \right\rangle dx &\leq & \dfrac{2}{ \| \mathcal{K} \|} \left( \left\| e_0 \right\|^2 - \left\| e(\bar{x}) \right\|^2 \right), \nonumber \\
\int_0^{\bar{x}} \left\| \mathcal{C}\gamma(x) \right\|_Y^2 dx &\leq & \dfrac{2}{\| \mathcal{K} \|} \left( \left\| e_0 \right\|^2 - \left\| e(\bar{x}) \right\|^2 \right),
\end{eqnarray}
%above inequality is from the fact that $\left\langle \mathcal{KC}\gamma,\gamma \right\rangle > 0$ and $\mathcal{C}$ is a boundary observation operator. 
now using observability inequality (\ref{eq25}) we have,
\begin{eqnarray}
k^2 \left\| e_0\right\|^2 &\leq & \dfrac{2}{ \| \mathcal{K} \|} \left( \left\| e_0 \right\|^2 - \left\| e(\bar{x}) \right\|^2 \right),\nonumber\\
\left\| e(\bar{x}) \right\|^2 &\leq & \left( 1-\dfrac{1}{\| \mathcal{K}\|}\right)\left\| e_0 \right\|^2
, %\nonumber \\
%\left\| e(\bar{x}) \right\|^2 &\leq & \left( 1-\left( \dfrac{1-M^2 e^{-2\omega \bar{x}}}{\|\mathcal{K}\|} \right) \right) \left\| e_0 \right\|^2,
\end{eqnarray}
%where $\left( 1-\left( \dfrac{1-M^2 e^{-2\omega \bar{x}}}{\|\mathcal{K}\|} \right) \right) < 1$ because from observability inequality $M^2 e^{-2\omega \bar{x}}<1$. Hence semigroup generated by $\mathcal{A-KC}$ is maximally dissipative.
where $\left( 1-\dfrac{1}{\| \mathcal{K}\|}\right)<1$ if $\| \mathcal{K}\| > 1$.

\end{proof}
\end{th1}
In section \ref{section_numerical}, the observer is implemented numerically using fictitious points on the estimated solution boundary. Numerical results are presented in the last section.

\section{Numerical implementation and results}\label{section_numerical}
First order state equation given in (\ref{inverse2}) can be discretized in variable $x$ using Forward Euler as follows,
\begin{eqnarray}
\label{discrete1}
\dot{\xi} &=& \mathcal{A}\xi, \nonumber\\
\frac{\xi^{n+1}-\xi^n}{\triangle x} &=& \mathcal{A}\xi^n, \nonumber\\
\xi^{n+1} &=& (I+(\triangle x)\mathcal{A})\xi^{n},
\end{eqnarray}
here $I$ is the identity matrix, $n$ is the discrete index for variable $x$ and $\triangle x$ is the step size along $x$ after discretization. Further equation (\ref{discrete1}) is discretized in variable $y$ using second order accurate centered finite difference schemes to discretize the first and second order derivative terms. Cauchy data is available on the top boundary however for the bottom boundary $\Gamma_{B}$ there's no data available and we assume that Laplace equation is valid on this boundary as given in problem (\ref{observer1}). Numerically this condition can be implemented using fictitious points along the inner boundary as explained in following section. 
%\cite{c12}
%This assumption is theoretically proved to be valid as operator $\mathcal{A}$ is dissipative as proved in theorem \ref{theorem1} and state estimation error converges to zero assymptotically if and only if observer gain $\mathcal{K}$ is designed in such a way that $\mathcal{KC}(\hat{\xi}-\xi)\approx 0$ after a number of observer iterations.

\subsection{Boundary condition on $\Gamma_{B}$}
As stated above, first order state equation can be thought of as an ODE with respect to variable $x$. Solution of this ODE is the state $\xi$ over the whole vertical line, that is, $\left(y\mid_{\Gamma_{B}},y\mid_{\Gamma_{T}}\right)$. This can be thought of as 2D Laplace equation has been split into a series of 1D state equations. To solve this 1D state equation in variable $x$, initial condition over the whole interval $\left(y\mid_{\Gamma_{B}},y\mid_{\Gamma_{T}}\right)$ and boundary conditions on $\Gamma_{B}$ and $\Gamma_{T}$ are required. Any initial guess can be chosen as $(\mathcal{A-KC})$ will be dissipative and any initial guess dies out. Two bounday conditions on $\Gamma_{T}$ are available, that is the measurement data on $\Gamma_{T}$ and available Neumann boundary data. However on $\Gamma_{B}$ it is assumed that Laplace equation is satisfied. That is,
\begin{equation}
\label{boundaryin}
\frac{\partial^2 u}{\partial x^2}= -\frac{\partial^2 u}{\partial y^2} \;\;\; on \;\Gamma_{B}.
\end{equation}
Equation (\ref{boundaryin}) contains second order derivative in variable $y$. To discretize this second derivative using second order accurate centered finite difference discretization scheme on $\Gamma_{B}$, there needs to be a fictitious point \cite{p1} further outside the boundary $\Gamma_{B}$ as shown in figure \ref{fictitious_fig}.
\vspace{1em}
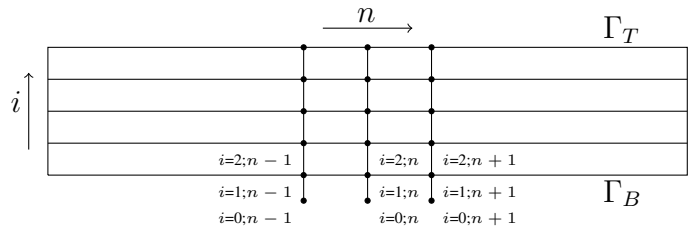
\begin{figure}[thpb]
			\centering
%	\framebox{\parbox{4in}
%	{
	\begin{center}
		\begin{tikzpicture}[scale=0.85]
			\begin{scope}
				\draw (0,0) -- (10,0) -- (10,2) -- (0,2) -- (0,0);
				\draw (5,-0.4) -- (5,2);
				\draw (4,-0.4) -- (4,2);
				\draw (6,-0.4) -- (6,2);
				\draw (0,0.5) -- (10,0.5);
				\draw (0,1) -- (10,1);
				\draw (0,1.5) -- (10,1.5);
				\draw[fill] (5,-0.4) circle [radius=0.04];
				\node [below right, black] at (5,-0.4) {\tiny{$i$=0;$n$}};
				\draw[fill] (5,0) circle [radius=0.04];
				\node [below right, black] at (5,0) {\tiny{$i$=1;$n$}};
				\draw[fill] (5,0.5) circle [radius=0.04];			
				\node [below right, black] at (5,0.5) {\tiny{$i$=2;$n$}};	
				\draw[fill] (4,-0.4) circle [radius=0.04];			
				\node [below left, black] at (4,-0.4) {\tiny{$i$=0;$n-1$}};
				\draw[fill] (4,0) circle [radius=0.04];			
				\node [below left, black] at (4,0) {\tiny{$i$=1;$n-1$}};
				\draw[fill] (4,0.5) circle [radius=0.04];			
				\node [below left, black] at (4,0.5) {\tiny{$i$=2;$n-1$}};
				\draw[fill] (6,-0.4) circle [radius=0.04];			
				\node [below right, black] at (6,-0.4) {\tiny{$i$=0;$n+1$}};
				\draw[fill] (6,0) circle [radius=0.04];			
				\node [below right, black] at (6,0) {\tiny{$i$=1;$n+1$}};
				\draw[fill] (6,0.5) circle [radius=0.04];			
				\node [below right, black] at (6,0.5) {\tiny{$i$=2;$n+1$}};
				\draw[fill] (6,1.0) circle [radius=0.04];
				\draw[fill] (6,1.5) circle [radius=0.04];						
				\draw[fill] (6,2.0) circle [radius=0.04];		
				\draw[fill] (5,1.0) circle [radius=0.04];
				\draw[fill] (5,1.5) circle [radius=0.04];						
				\draw[fill] (5,2.0) circle [radius=0.04];	
				\draw[fill] (4,1.0) circle [radius=0.04];
				\draw[fill] (4,1.5) circle [radius=0.04];						
				\draw[fill] (4,2.0) circle [radius=0.04];
				\draw[->] (4.3,2.3) -- (5.7,2.3);
				\put (117,58){$n$};
				\draw[->] (-0.3,0.4) -- (-0.3,1.6);
				\put (-14, 24){$i$};
				\put (210,-10){$\Gamma_{B}$};
				\put (210,52){$\Gamma_{T}$};				
				%\draw[fill=light-gray] (2,0) rectangle (6cm,2.5cm);				
				%\draw (-1,-2) rectangle (5,2.5);
				%\put(70,30){$\Omega=(0,a)\times (0,b)$};
				%\put(100,-12){$\Gamma_{B}$};
				%\put(100,77){$\Gamma_{T}$};
				%\put(42,30){${\Gamma_{L}}$};
				%\put(35,-12){$(0,0)$};
				%\put(35,75){$(0,b)$};
				%\put(165,-12){$(a,0)$};
				%\put(165,75){$(a,b)$};
				%\put(173,30){${\Gamma_{R}}$};
			\end{scope}
		\end{tikzpicture}
	\end{center}
%	}
	\caption{Domain $\Omega$ after discretization and fictitious points outside $\Gamma_{B}$, index $i=0$ represents fictitious points.}
	\label{fictitious_fig}
\end{figure}

Second equation in (\ref{discrete1}), after full discretization, can be written as,
\begin{equation}
\label{fulldiscret}
\dfrac{(\xi_2)_i^{n+1}-(\xi_2)_i^{n}}{\triangle x}  =- \dfrac{(\xi_1)_{i+1}^{n}-2(\xi_1)_{i}^{n}+(\xi_1)_{i-1}^{n}}{(\triangle y)^2},
\end{equation}
here $i$ is the discrete index and $(\triangle y)$ is the step size along variable $y$ such that $i=1$ on the bottom boundary $\Gamma_{B}$. Equation (\ref{fulldiscret}) on the bottom boundary $\Gamma_{B}$ can be written as,
\begin{equation}
\dfrac{(\xi_2)_1^{n+1}-(\xi_2)_1^{n}}{\triangle x}  =- \dfrac{(\xi_1)_{2}^{n}-2(\xi_1)_{1}^{n}+(\xi_1)_{0}^{n}}{(\triangle y)^2},
\end{equation}
index $i=0$ represents fictitious point and taking out this fictitious point gives,
\begin{equation}
\label{fictitious}
%\begin{split}
(\xi_1)_{0}^{n} = 2(\xi_1)_1^n-(\xi_1)_2^n -\dfrac{(\triangle y)^2}{\triangle x} \left\{ (\xi_2)_1^{n+1}-(\xi_2)_1^n \right\}.
%\end{split}
\end{equation}
$(\xi_1)_1^n, (\xi_1)_2^n, (\xi_2)_1^n$ and $(\xi_2)_1^{n+1}$ are given by the initial guess of the states over the whole domain. The algorithm is run for a number of iterations along $x$ by using the solution of the previous iteration as a guess for the next until the final convergence is achieved. In the following subsection observer is presented in semi-discrete form and fictitious points method is used to tackle the boundary condition on $\Gamma_{B}$.

\subsection{Observer in semi-discrete form}
%In control theory, state observer is a system that provides an estimate of the internal states of a dynamic system from the measurements of inputs and outputs. In this work steady state heat conduction problem is posed as dynamical problem, where variable $\theta$ plays the role of time component like other dynamical systems. \\~\\
In the following, state observer presented in system of equations (\ref{observer1}) is discretized only in variable $x$ for simplicity.
\begin{eqnarray}
\label{observer2}
\begin{cases}
\hat{\xi}^{n+1,m}=(I+(\triangle x)\mathcal{A})\hat{\xi}^{n,m}-\mathcal{KC}(\hat{\xi}^{n,m}-\xi^{n}) & in \;\Omega,\\
%{\hat{\xi}}^n = \xi_{init} & in \;\bar{\Omega}\backslash\Gamma_{out},\\
%{\hat{\xi}_{1}}^n=h_{init}^n(r,\theta) & on \;\Gamma_{in},\\
%\hat{\xi}_1^n=\hat{f}^n(\theta) & on \;\Gamma_{out},\\
\dfrac{\partial}{\partial y}\hat{\xi}_{1}^{n,m}=g^{n}(x) & on \;\Gamma_{T},\\
\dfrac{1}{2\triangle x} \left( \hat{\xi}_1^{n+1,m}-\hat{\xi}_1^{n-1,m}\right) = -\dfrac{\partial^2}{\partial y^2} \hat{\xi}_1^{n,m} \\ \quad\quad\quad\quad\quad\quad\quad\quad\quad\quad\quad\quad -\mathcal{KC} (\hat{\xi}^{n,m}-\xi^n) & on \;\Gamma_{B},\\
\hat{\xi}^{n,m}\mid_{initial} = \hat{\xi}^{n,m-1} & in \;\bar{\Omega},
\end{cases}
\end{eqnarray}
again here $\hat{}$ represents estimated quantity and $m$ is the index of iteration over the rectangular domain. $\hat{\xi}^{m,n}\mid_{initial}$ represents estimate for particular value of index $n$ at the start of $m^{th}$ iteration. Algorithm starts at $m=1$ and index $m=0$ represents the raw data over the whole mesh before start of the algorithm. At the start of the algorithm, any initial guess can be chosen over the whole domain and then solution of the previous iteration is used as a guess for subsequent iterations. Fictitious points are computed using formula given in (\ref{fictitious}) and are used in third equation in (\ref{observer2}) to discretize the second order derivative with respect to $y$ on $\Gamma_{B}$. Important point to note here is that true Neumann boundary condition is applied on the outer boundary and operators $\mathcal{A,K}$ and $\mathcal{C}$ are continuous in variable $y$.
%\subsection{Improving convergence by iteration}
%As the name suggests, space iterative algorithm is run for a number of iterations over the whole annulus domain in the direction of variable $\theta$. Initial guess at the start of first iteration is given by $\hat{\xi}\mid_{\bar{\Omega}}$, whereas for each subsequent iteration result of the previous iteration is used as a guess over $\bar{\Omega}$. We can choose any initial guess and results for number of iterations required to achieve convergence for different initial guess is given in section \ref{5}.

\subsection{Algorithm step-by-step}
\begin{itemize}
\item \textit{Step 1}: Initialize mesh over the whole domain $\bar{\Omega}$ with $\hat{\xi}^{m=0}=\xi_0$.
\item \textit{Step 2}: For $m=1$, start at a particular value of $x$,
\begin{itemize} 
\item Compute the fictitious point value $\left( \xi_1 \right)_0^{n, m=1}$ for particular value of $n$ using equation (\ref{fictitious}).
\item Solve system of equations (\ref{observer2}) to find estimate $\hat{\xi}^{n+1, m}$ over a particular vertical line.
\item Repeat the process of finding fictitious point from equation (\ref{fictitious}) and solving system of equations (\ref{observer2}) for all $n$ until one iteration on interval of length $a$ on the rectangular domain shown in Figure.~\ref{domain} is complete.
\end{itemize}
\item \textit{Step 3}: Repeat \textit{Step 2} for $m\geq 2$ using result of $(m-1)$th iteration as a guess for $m$th iteration until convergence is achieved. That is, $\| \xi_1 - \hat{\xi}^m_1 \|_{\Gamma_{out}}<\epsilon$.
\end{itemize}
\subsection{State estimation error and computation of observer gain}
State error boundary value problem in semi-discrete form can be written as,\\
For $m\geq 1$, find $e^{n,m}=(\hat{\xi}^{n,m}-\xi^n)\in\bar{\Omega}$:
\begin{equation}
\begin{cases}
e^{n+1,m}= \left(I+(\triangle x)\mathcal{A} -\mathcal{KC}\right) (\hat{\xi}^{n, m} - \xi^{n}) & in \;\bar{\Omega}\backslash\Gamma_{T},\\
\dfrac{\partial}{\partial y}e_1^{n,m}=\dfrac{\partial}{\partial y} \left( \hat{\xi}_1^{n, m} -\xi_1^{n} \right) = 0 & on \;\Gamma_{T},\\
e^{n,m}\mid_{initial} = e^{n,m-1} & in \;\bar{\Omega}.
\end{cases}
\end{equation}
%\begin{eqnarray}
%\label{state_error_1}
%\hat{\xi}^{n+1}-\xi^{n+1} &=& \left(I+(\triangle \theta)\mathcal{A}\right) (\hat{\xi}^{n}-\xi^{n})-\mathcal{KC}(\hat{\xi}^n-\xi^n) \nonumber, \\
%e^{n+1} &=& \left(I+(\triangle \theta)\mathcal{A}-\mathcal{KC}\right)e^n.
%\end{eqnarray}
Finally the state error difference equation after full discretization can be written as,
\begin{equation}
\label{state_error_2}
\mathtt{e}^{n+1, m} = \left(I+(\triangle x)A-KC\right)\mathtt{e}^{n, m}\;\;\;\;\;\;for\;\;m\geq 1,
\end{equation}
here $A,K$ and $C$ are discrete versions of operators $\mathcal{A,K}$ and $\mathcal{C}$ respectively and $\mathtt{e}$ is the state estimation error after full discretization. Given $(I+(\triangle x) A)$ and observation matrix $C$, gain matrix $K$ can be computed using Ackermann's formula for pole placement in Matlab such that eigenvalues of $(I+(\triangle x)A-KC)$ are inside the unit circle on the complex plane \cite{c13}. 
%A necessary condition for the convergence of observer of this kind is the system observability.
\subsection{Results and simulations}
\label{5}
For all numerical and analytical solutions in this section, a rectangle domain $\Omega=(0,a)\times(0,b)$ with $a=2\pi$ and $b=\frac{1}{2}$ is considered. To validate the observer approach a number of examples are presented as follows.
%Eigenvalues of discrete operator matrix $A$ are close to one another in magnitude. This shows the high sensitivity of the problem. We will see in this section that a Tikhonov kind of regularization can be used to reduce the high sensitivity of the problem.
\subsubsection{Example 1: Homogeneous Neumann side boundaries}
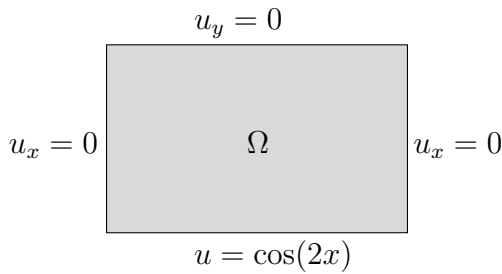
\begin{figure}[thpb]
			\centering
%	\framebox{\parbox{4in}
%	{
	\begin{center}
		\begin{tikzpicture}%[framed]
			\begin{scope}
				\draw[fill=light-gray] (2,0) rectangle (6cm,2.5cm);				
				%\draw (-1,-2) rectangle (5,2.5);
				\put(110,30){$\Omega$};
				\put(90,-12){$u=\cos(2x)$};
				\put(90,77){$u_y=0$};
				\put(20,30){$u_x=0$};
				\put(173,30){$u_x=0$};
			\end{scope}
		\end{tikzpicture}
	\end{center}
%	}
	\caption{Two dimensional rectangle domain with homogeneous Neumann side boundaries, Example 1.}
	\label{example1}
\end{figure}
Consider the boundary value problem in a rectangular domain with homogeneous Neumann side boundaries as shown in Figure~\ref{example1}. This problem can be solved using separation of variables and solution is given as,
\begin{equation}
\label{example1_sol}
u(x,y) = \dfrac{\cosh(4\pi (y-b)/a)}{\cosh(4\pi b/a)}\cos(4\pi x/a),
\end{equation}
To validate observer based approach this analytical solution given in (\ref{example1_sol}) along with homogeneous Neumann boundary condition is used as Cauchy data on the top boundary $\Gamma_T$. Using this Cauchy data state observer algorithm is run for a number of iterations to recover the unknown boundary data on the bottom boundary $\Gamma_B$. Figure~\ref{example1_fig} shows the comparison of exact solution and the one recovered by using Cauchy data on $\Gamma_B$ and observer algorithm.
\begin{figure}[thpb]
      \centering
      \framebox{\parbox{3in}{
      
      \begin{center}
      \includegraphics[scale=0.38]{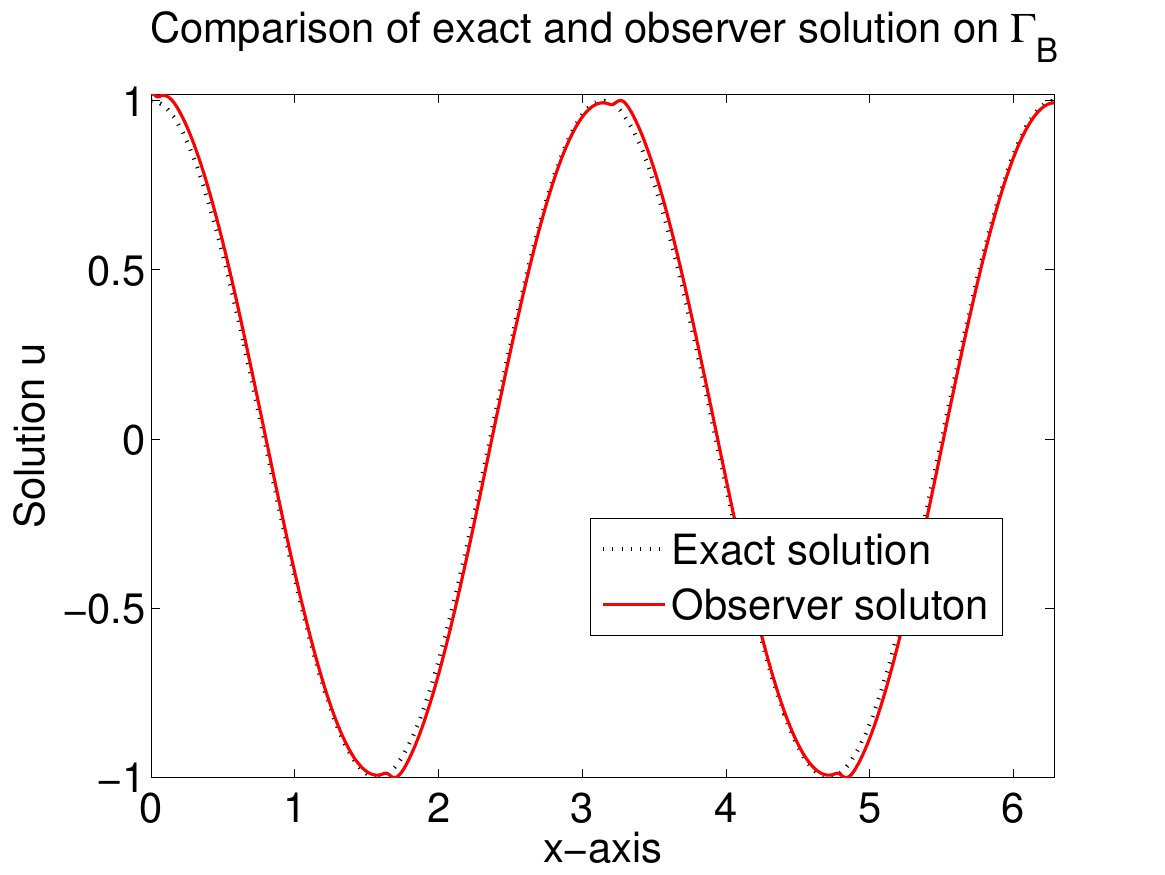}
      \end{center}
      }}
      \caption{Comparison of exact and observer constructed solution on the bottom boundary $\Gamma_B$.}
     \label{example1_fig}
\end{figure}

\subsubsection{Example 2: Homogeneous Dirichlet side boundaries}
\begin{figure}[thpb]
			\centering
%	\framebox{\parbox{4in}
%	{
	\begin{center}
		\begin{tikzpicture}%[framed]
			\begin{scope}
				\draw[fill=light-gray] (2,0) rectangle (6cm,2.5cm);				
				%\draw (-1,-2) rectangle (5,2.5);
				\put(110,30){$\Omega$};
				\put(90,-12){$u=\sin(2x)$};
				\put(90,77){$u_y=0$};
				\put(25,30){$u=0$};
				\put(173,30){$u=0$};
			\end{scope}
		\end{tikzpicture}
	\end{center}
%	}
	\caption{Two dimensional rectangle domain with homogeneous Dirichlet side boundaries, Example 2.}
	\label{example2}
\end{figure}
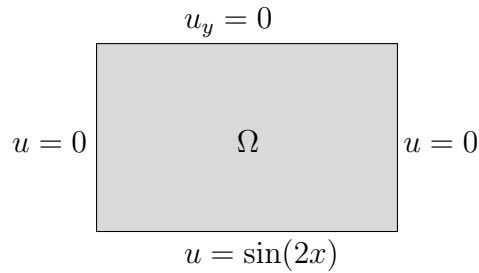
Consider the boundary value problem with homogeneous Dirichlet side boundaries as shown in Figure~\ref{example2}. Analytical solution is given as,
\begin{equation}
\label{example2_sol}
u(x,y) = \dfrac{\cosh(4\pi (y-b)/a)}{\cosh(4\pi b/a)}\sin(4\pi x/a),
\end{equation}
Now using this analytical solution along with homogeneous Neumann boundary condition on the top boundary $\Gamma_B$, observer algorithm is run for a number of iterations to recover the unknown Dirichlet boundary data on $\Gamma_B$. Figure~\ref{example2_fig} shows the comparison of the exact and observer constructed solution on $\Gamma_B$.
\begin{figure}[thpb]
      \centering
      \framebox{\parbox{3in}{
      
      \begin{center}
      \includegraphics[scale=0.38]{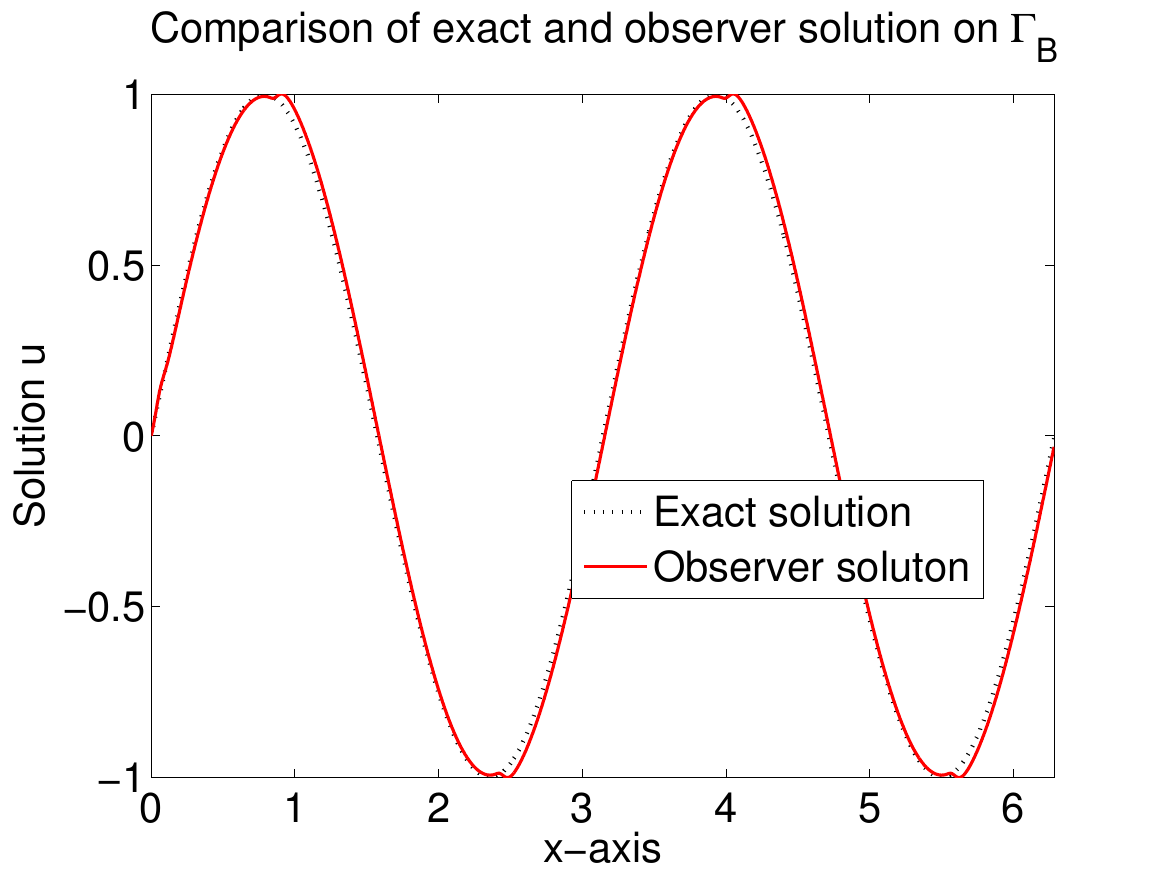}
      \end{center}
      }}
      \caption{Comparison of exact and observer constructed solution on the bottom boundary $\Gamma_B$.}
     \label{example2_fig}
\end{figure}

\subsubsection{Example 3: Linear combinations of example 1 and 2}
It is easy to see that any linear combination of above two example problems can be solved using observer based technique. In other words any Dirichlet boundary data on $\Gamma_B$ that can be represented as a trigonometric Fourier series can be recovered using observer based approach given homogeneous Dirichlet, Neumann or Robin kind of side boundaries. The requirement of such homogeneous side boundaries suggest that there are no active sources on the side boundaries which is indeed the case for many applications like electrocardiography (ECG) where objective is to find heart potential which is deep inside the body from the only available ECG data on a limited part of body torso \cite{c7,c6}. The observer based approach is the optimum technique in cases where there is no information available on the side boundaries. Figure~\ref{example3_fig} compares the exact solutions in different test cases to the one obtained by using observer. Numerical solution was achieved using homogeneous Neumann boundaries on $\Gamma_L, \Gamma_R$ and $\Gamma_T$ and non zero Dirichlet data on $\Gamma_B$. The observer solution was constructed using only the Cauchy data on $\Gamma_T$. 
\begin{figure}[thpb]
	\centering
	\framebox{\parbox{3in}{
	
	\begin{center}
	\includegraphics[scale=0.38]{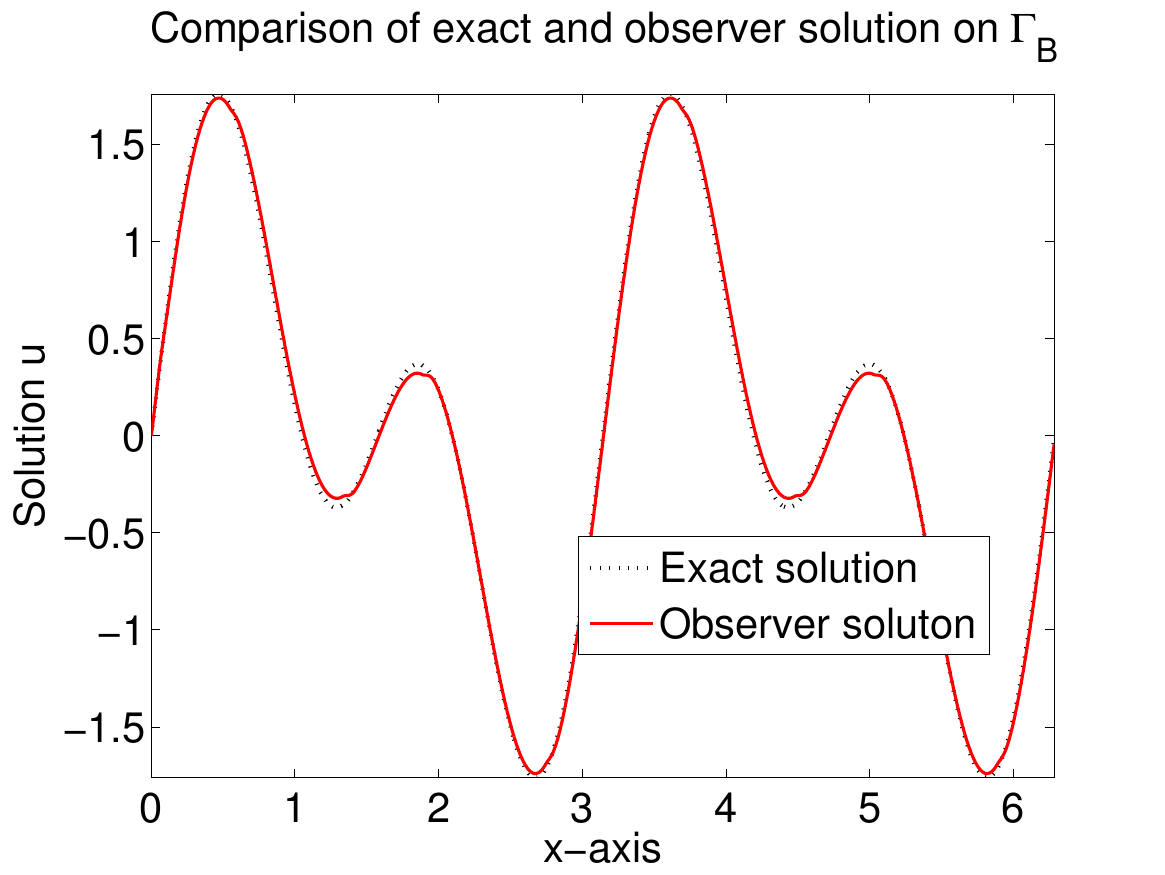} \\~\\ \includegraphics[scale=0.38]{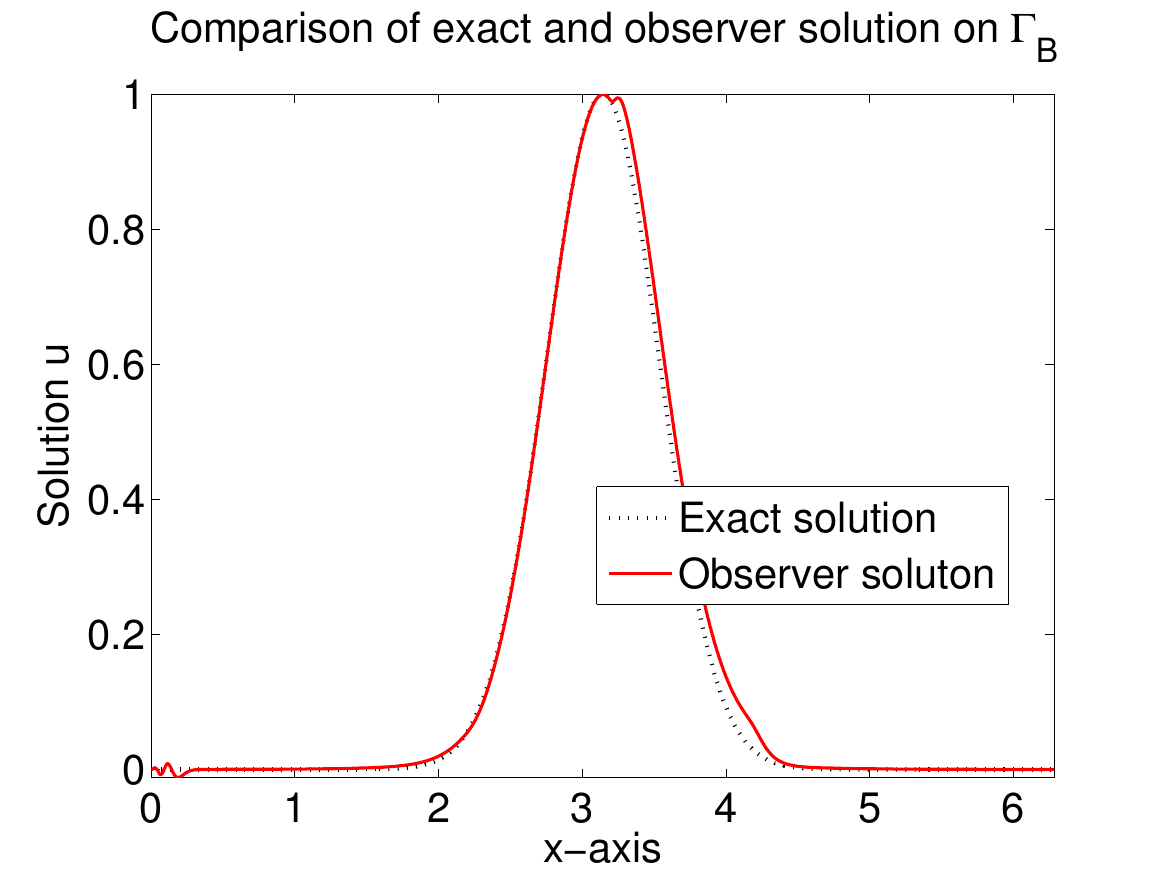}\\~\\
	\includegraphics[scale=0.38]{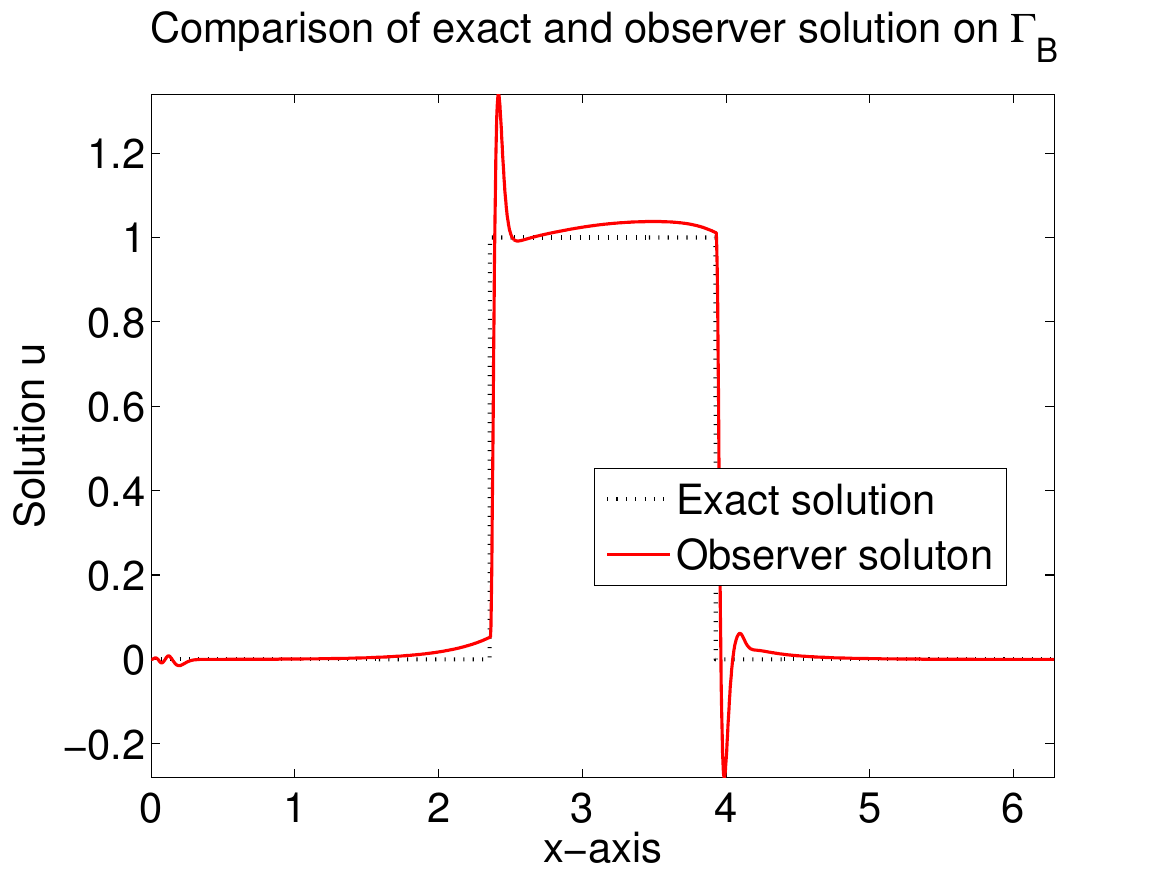} \\~\\ \includegraphics[scale=0.38]{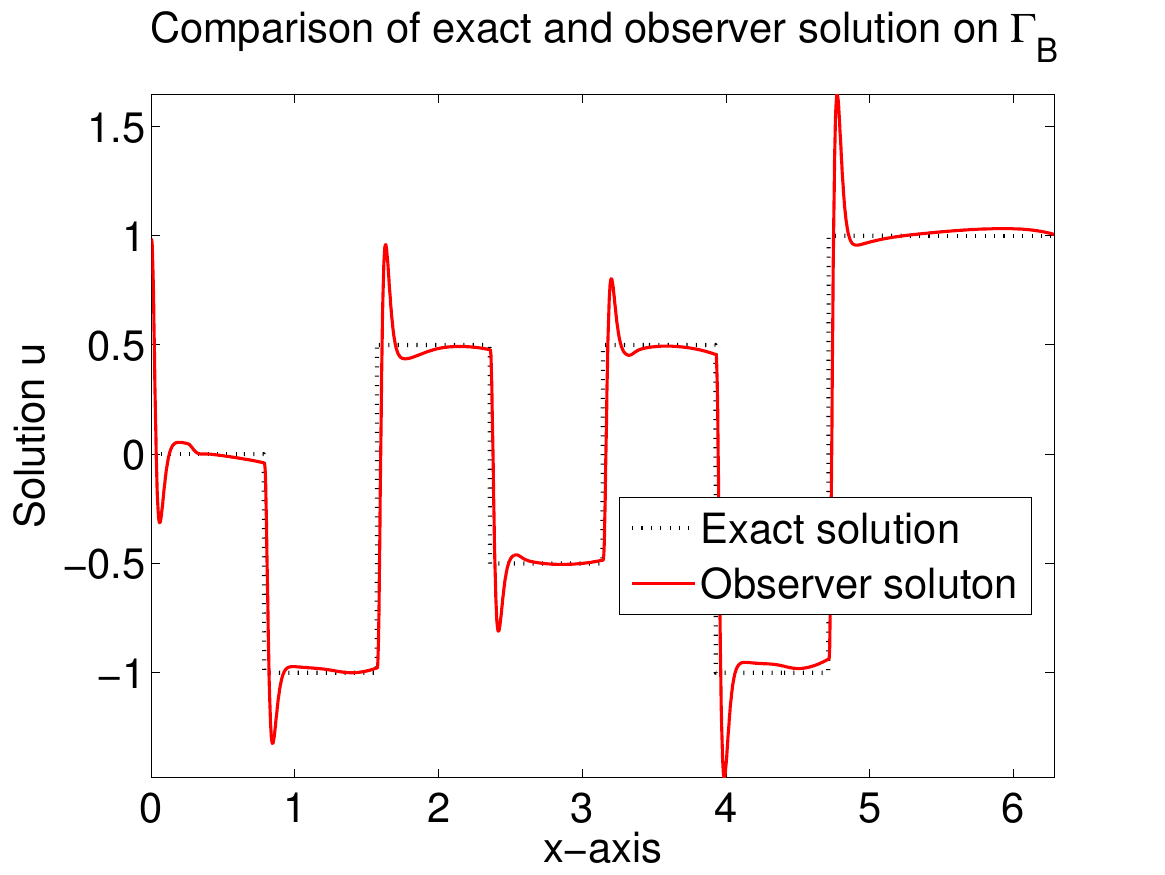}
	\end{center}
	}}
	\caption{Comparison of exact and observer constructed solution on the bottom boundary $\Gamma_B$.}
%	\end{center}
	\label{example3_fig}
\end{figure}

\section{Conclusions}\label{conclusions}
Cauchy problem for Laplace equation is a steady state problem. The design of a dynamical systems' inspired technique like observer for this problem is challenging and the idea to use one of the space variables as a time-like variable has not been considered before.

Different from standard approaches to tackle this problem, an iterative observer is constructed in infinite dimensional setting on a rectangle domain without introducing an extra time variable. Laplace equation is presented as a first order state equation with state operator matrix. % with state operator matrix $\mathcal{A}$. %Semigroup study for this new kind of operator is provided along with system observability result. 
 Conditions for the existence of strongly continuous semigroup generated by this state operator matrix are provided. Further the conditions for the existence of observer gain are detailed. Numerical results are provided for different example test cases. This paper reflects the possibility of considering a steady state problem from a dynamical theory perspective by using one of the space variables as a time. Successful implementation of the algorithm and promising results show a step forward in the direction of using dynamical systems' inspired algorithms to solve steady-state problems modeled by time independent PDEs and without introducing a particular notion of time.

Possible future work includes  extension of the proposed method to three dimensions with more complicated domains using the interesting observability result obtained in this work.

% Observer is designed by writing second order Laplace equation as a first order state equation in one of the space variables. Resulting state operator matrix is studied in Banach space setting using semigroup theory. System comprising of state operator matrix and observation operator is shown to be observable and existence of a gain operator to recover the unknown boundary data is proved. Finally numerical and analytical solutions are presented for various test cases to show the effectiveness of observer technique.

\section*{Acknowledgements}
The research work done in this paper was supported by King Abdullah University of Science and Technology (KAUST), K.S.A.

\vspace{-1cm}
\begin{IEEEbiographynophoto}{Muhammad Usman Majeed}
is a PhD candidate in Computer, Electrical and Mathematical Sciences and Engineering (CEMSE) Division at King Abdullah University of Science and Technology (KAUST), Kingdom of Saudi Arabia. He joined KAUST in fall 2011 as an MS student and completed his masters in applied mathematics and computational sciences in December 2012. Before joining KAUST, Usman completed his Bachelors in Electrical Engineering in 2009 and worked as Lecturer in the Department of Electrical Engineering, University of Engineering and Technology (UET), Lahore Pakistan. Webpage: http://usmanmajeed.us/
\end{IEEEbiographynophoto}
% insert where needed to balance the two columns on the last page with
% biographies
%\newpage
\vspace{-1cm}
\begin{IEEEbiographynophoto}{Taous Meriem Laleg-Kirati} is an assistant professor in the division of Computer, Electrical and Mathematical Sciences and Engineering at KAUST.  She joined KAUST in December 2010. From 2009 to 2010, she was working as a research scientist at the French
Institute for research in Computer Sciences and Control Systems (INRIA) in Bordeaux. She earned her Ph.D in Applied Mathematics from INRIA Paris, in 2008. She holds a Master in control systems and signal
processing from University Paris 11 in France. Her research interests include, modeling, estimation, and control of physical systems, inverse problems, and signal and image analysis. She considers applications in engineering and biomedical fields. She is IEEE senior member. Webpage: http://emang.kaust.edu.sa
\end{IEEEbiographynophoto}

% You can push biographies down or up by placing
% a \vfill before or after them. The appropriate
% use of \vfill depends on what kind of text is
% on the last page and whether or not the columns
% are being equalized.

%\vfill

% Can be used to pull up biographies so that the bottom of the last one
% is flush with the other column.
%\enlargethispage{-5in}

% that's all folks
\end{document}